 \received{\dots}{\dots}
\numberwithin{equation}{section}
\theoremstyle{plain}
\newtheorem{theorem}{Theorem}[section]
\newtheorem{lemma}[theorem]{Lemma}
\newtheorem{corollary}[theorem]{Corollary}
\theoremstyle{definition}
\newtheorem{example}[theorem]{Example}
\newtheorem*{remarks}{Remarks}
\newcommand{\suchthat}{\ensuremath{\mid}}  % (new notation)
\newcommand{\defeq}{\ensuremath{=}}        % (new notation)
\newcommand{\betai}{\ensuremath{\beta^i}}  % \beta^{\bB_i}   % (old notation)
\newcommand{\betaj}{\ensuremath{\beta^j}}  % \beta^{\bB_j}
\newcommand{\uacalc}{{\small UAC}alc} %bd \small OK
\newcommand{\Palfy}{P\'alfy}
\newcommand{\Pudlak}{Pudl\'ak}
\newcommand{\PP}{P\'alfy-Pudl\'ak}
\newcommand{\<}{\ensuremath{\langle}}
\renewcommand{\>}{\ensuremath{\rangle}}
\newcommand{\bA}{\ensuremath{\mathbf{A}}}
\newcommand{\bB}{\ensuremath{\mathbf{B}}}
\newcommand{\sB}{\ensuremath{\mathcal{B}}}
\newcommand{\sT}{\ensuremath{\mathscr{T}}}
\newcommand{\sI}{\ensuremath{\mathcal{I}}}
\DeclareMathOperator{\id}{id}
\DeclareMathOperator{\Eq}{Eq}
\DeclareMathOperator{\Cg}{Cg}
\DeclareMathOperator{\Con}{Con}
\DeclareMathOperator{\Pol}{Pol}
\DeclareMathOperator{\Clo}{Clo}
\newcommand{\htheta}{\ensuremath{\widehat{\theta}}}
\newcommand{\supi}{\ensuremath{^{i}}}
\newcommand{\supj}{\ensuremath{^{j}}}
\renewcommand{\leq}{\ensuremath{\leqslant}}
\renewcommand{\geq}{\ensuremath{\geqslant}}
\renewcommand{\Join}{\ensuremath{\bigvee}}
\newcommand{\resB}{\ensuremath{|_{_B}}}
\newcommand{\eps}{\ensuremath{\varepsilon}}
\newcommand{\hatmap}{\ensuremath{\widehat{\phantom{x}}}} %bd ok
\newcommand{\one}{\ensuremath{\mathbf{1}}}
\newcommand{\two}{\ensuremath{\mathbf{2}}}
\newcommand{\three}{\ensuremath{\mathbf{3}}}
\newcommand{\tbeta}{\ensuremath{\widetilde{\beta}}}
\newcommand{\hbeta}{\ensuremath{\widehat{\beta}}}
\newcommand{\CE}{\ensuremath{\sB_r^n}}
\newcommand{\GAP}{{\small GAP}} %bd ok
\newcommand{\dotsize}{1.0pt}%bd{.8pt} changed to increase the size of the nodes in the lattice diagrams
\begin{document}

%%%%%%%%%%%%%%%%%%%%%%%%%%%%%%%%%%%%%%%%%%%%%%%%%%%%%%%%%%%%%%%%%%%
%%  FRONT MATTER                                                 %%
%%%%%%%%%%%%%%%%%%%%%%%%%%%%%%%%%%%%%%%%%%%%%%%%%%%%%%%%%%%%%%%%%%%

\title[Expansions of finite algebras]{Expansions of finite algebras and their\\ %bd \\ added
congruence lattices}% 

%% First author (Note: The order of the items here is important!)
\author[W. DeMeo]{William DeMeo} %bd short name added
\email{williamdemeo@gmail.com}
\urladdr{http://www.math.sc.edu/~demeow}
\address{Department of Mathematics\\
University of South Carolina\\Columbia 29208\\USA}

%%jp
%% Shifted to acknowledegments at end of paper:
%\thanks{The author wishes to thank a number of people who contributed to this
%  work.  In particular, the primary referee made many insightful recommendations
%  which led to vast improvements, not only in the overall exposition, but also in
%  the specific algebraic constructions, theorem statements, and proofs.
%  This article is dedicated to Ralph Freese and Bill Lampe, who declined to be
%  named as co-authors despite their substantial contributions to the project.}

%% Dedication (Optional)
\dedicatory{This article is dedicated to Ralph Freese and Bill
  Lampe.}

%% AMS subject classification; see http://www.ams.org/msc
%% Only one Primary. Possibly several Secondary.
\subjclass[2010]{Primary: 08A30; Secondary: 08A60, 06B10.} %, 06B15.}

%% Keywords and phrases
\keywords{congruence lattice, finite algebra, finite lattice
  representations}

%%%%%%%%%%%%%%%%%%%%%%%%%%%%%%%%%%%%%%%%%%%%%%%%%%%%%%%%%%%%%%%%%%%
%%  MAIN MATTER                                                  %%
%%%%%%%%%%%%%%%%%%%%%%%%%%%%%%%%%%%%%%%%%%%%%%%%%%%%%%%%%%%%%%%%%%%

\begin{abstract}
In this paper we present a novel approach to the construction of new finite
algebras and describe the congruence lattices of these algebras. Given a finite
algebra $\<B_0, \dots\>$, let $B_1, B_2, \dots, B_K$ be sets that either intersect
$B_0$ or intersect each other at certain points.
We construct an  {\it overalgebra} $\<A, F_A\>$, by which we mean an expansion
of $\<B_0, \dots\>$ with universe  $A = B_0 \cup B_1 \cup \cdots \cup B_K$,  and
a certain set $F_A$ of unary operations that includes mappings $e_i$ satisfying
$e_i^2 = e_i$ and $e_i(A) = B_i$, for $0\leq i \leq K$.
We explore two such constructions and prove results about the shape of
the new congruence lattices $\Con \<A, F_A\>$ that result. Thus, descriptions of
some new classes of finitely representable lattices is one contribution of this
paper. Another, perhaps more significant contribution is the announcement of a
novel approach to the discovery of new classes of representable lattices, the
full potential of which we have only begun to explore.
\end{abstract}

\maketitle

%%%%%%%%%%%%%%%%%%%%%
\section{Introduction}
A lattice $L$ is called \emph{algebraic} if it is complete and if every element
$x \in L$ is the supremum of the compact elements below $x$.
Equivalently, a lattice is algebraic if and only if it is the congruence lattice
of an algebra.  (The fact that every algebraic lattice is the congruence lattice of an
algebra was proved by Gr\"{a}tzer and Schmidt
in~\cite{GratzerSchmidt:1963}.)  An important and long-standing open problem in
universal algebra is to characterize those lattices that are isomorphic to congruence
lattices of \emph{finite} algebras.  We call such lattices \emph{finitely representable}.
Until this problem is resolved, our understanding of finite algebras is
incomplete, since, given an arbitrary finite algebra, we cannot say
whether there are any restrictions on the shape of its congruence lattice.
If we find a finite lattice which does not occur as the congruence lattice of a
finite algebra (as many suspect we will), then we can finally declare that such
restrictions do exist.

The main contribution of this paper is the description and analysis of a
new procedure for generating finite lattices which are, by
construction, finitely representable.
Roughly speaking, we start with an arbitrary finite algebra $\bB \defeq  \<B,
\dots\>$, with known congruence lattice $\Con\bB$, and we let $B_1, B_2, \dots,
B_K$ be sets that either intersect $B$ or intersect each other at certain points.
The choice of intersection points plays an important role which we will describe
in detail later.  We then construct an  {\it overalgebra} $\bA\defeq \<A,
F_A\>$, by which we mean an expansion of $\bB$ with universe $A = B \cup B_1
\cup \cdots \cup B_K$,  and a certain set $F_A$ of unary operations that
includes idempotent mappings $e$ and $e_i$ satisfying $e(A) = B$ and $e_i(A) =
B_i$.

Given our interest in the problem mentioned above, the important consequence of
this procedure is the new (finitely representable) lattice $\Con\bA$ that
it produces.  The shape of this lattice is, of course, determined by
the shape of $\Con\bB$, the choice of intersection points of the $B_i$, and the
unary operations chosen for inclusion in $F_A$.  In this paper, we
describe two constructions of this type and prove some results
about the shape of the congruence lattices of the resulting overalgebras.
However, it is likely that we have barely scratched the surface of useful
constructions that are possible with this approach.

Before giving an overview of the paper, we describe the seminal example that
provided the impetus for this work.  In the spring of 2011, while our research
seminar was mainly focused on the finite congruence lattice representation
problem, we were visited by Peter Jipsen who initiated the ambitious project of
cataloging every small finite lattice $L$ for which there is a known finite
algebra $\bA$ with $\Con\bA\cong L$.  Before long we had identified such finite
representations for all lattices of order seven or less, except for the two
lattices appearing in Figure~\ref{fig:sevens}.
\begin{figure}[h!]
  \centering

    \begin{tikzpicture}[scale=.7]

      % lat5
      \node (01) at (0,1)  [draw, circle, inner sep=\dotsize] {};
      \foreach \j in {0,2}
      { \node (1\j) at (1,\j)  [draw, circle, inner sep=\dotsize] {};}

      \foreach \j in {1,3}
      { \node (2\j) at (2,\j)  [draw, circle, inner sep=\dotsize] {};}
      { \node (32) at (3,2)  [draw, circle, inner sep=\dotsize] {};}
      \draw[semithick] (10) to (01) to (12) to (23) to (32) to (21) to (10) (21) to (12);
      { \node (m11) at (-1,1)  [draw, circle, inner sep=\dotsize] {};}
      \draw[semithick] (10) to (m11) to (23);

      % lat6

      \foreach \j in {0,3}
      { \node (7\j) at (6.5,\j)  [draw, circle, inner sep=\dotsize] {};}
      \node (71) at (7,1.5)  [draw, circle, inner sep=\dotsize] {};
      \node (61) at (6,1.5)  [draw, circle, inner sep=\dotsize] {};
      \node (51) at (5,1.5)  [draw, circle, inner sep=\dotsize] {};
      \foreach \j in {1,2}
      { \node (8\j) at (7.8,\j)  [draw, circle, inner sep=\dotsize] {};}
      \draw[semithick] (70) to (51) to (73) to (61) to (70) to (71) to (73) to
      (82) to (81) to (70);

    \end{tikzpicture}

  \caption{Lattices of order 7 with no obvious finite algebraic representation.}
  \label{fig:sevens}
\end{figure}
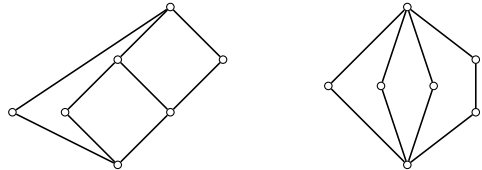
%bd \noindent 
Ralph Freese then discovered a way to construct an algebra
that has the second of these as its congruence lattice. The idea
is to start with an algebra $\bB = \<B, \dots\>$ having congruence lattice $\Con \bB
\cong M_4$, expand the universe to a larger set, $A = B\cup B_1 \cup B_2$, and
then define the right set $F_A$ of operations on $A$ so that the congruence lattice
of $\bA = \< A, F_F\>$ will be an $M_4$ with one atom 
%bd ``doubled'' -- that is,
%bd should be emdash with no space. Fixed here and many times below
``doubled''---that is,
$\Con\bA$ will be the second lattice in Figure~\ref{fig:sevens}.

In this paper we formalize this approach and extend it in two ways.  The first
is a generalization of the original overalgebra construction.
The second is a construction based on one suggested by Bill Lampe that
addresses a basic limitation of the original procedure.  For each of these
constructions we prove results that describe the congruence
lattices of the resulting overalgebras.

Here is a brief outline of the remaining sections of the paper:
In Section~\ref{sec:residuation-lemma} we prove a lemma that simplifies
the analysis of the structure of the newly enlarged congruence lattice and
its relation to the original congruence lattice.
In Section~\ref{sec:overalgebras} we define {\it overalgebra} and in
Section~\ref{sec:overalgebras-i} we give a formal description of
the first overalgebra construction mentioned above.  We then describe Freese's
example in detail before proving some general results about
the congruence lattices of such overalgebras.
We conclude Section~\ref{sec:overalgebras-i} with an example demonstrating the
utility of the first overalgebra construction.
Section~\ref{sec:overalgebras-ii}
presents a second overalgebra construction which overcomes a basic limitation
of the first.
The last section discusses the impact that our results
have on the main problem---the finite congruence lattice representation problem---as well as the inherent limitations of this approach.

%%%%%%%%%%%%%%%%%%%%%
\section{Residuation lemma} %bd changed from Lemma
\label{sec:residuation-lemma}
Let $\bA \defeq  \<A, \dots \>$ be an algebra with congruence lattice $\Con\<A, \dots \>$.
Recall that a \emph{clone} on a non-void set $A$ is a set of operations on $A$
that contains the projection operations and is closed under compositions.
The \emph{clone of term operations} of the algebra $\bA$, denoted by $\Clo (\bA)$,
is the smallest clone on $A$ containing the basic operations of $\bA$.
The \emph{clone of polynomial operations} of $\bA$, denoted by
$\Pol(\bA)$, is the clone generated by the basic operations
of $\bA$ and the constant unary maps on $A$. The set of $n$-ary members of
$\Pol(\bA)$ is denoted by $\Pol_n(\bA)$.
It is not hard to show that
$\Con\<A, \dots \>  = \Con \<A, \Clo (\bA)\> = \Con \<A, \Pol (\bA)\>= %\Con \<A, \Pol_n (\bA)\>=
\Con \<A, \Pol_1 (\bA)\>$; see~\cite[Theorem~4.18]{alvi:1987}.

Suppose $e \in \Pol_1(\bA)$ is a unary polynomial satisfying $e^2(x) = e(x)$ for
all $x\in A$.  Define $B=e(A)$ and
$F_B = \{ef\resB \suchthat f\in \Pol_1(\bA)\}$, and consider the
unary algebra $\bB= \<B, F_B\>$. % \footnote{By \emph{unary algebra} we mean an algebra with a
  % finite number of unary operations -- what some authors call a
  % \emph{multi-unary algebra}.
(In the definition of  $F_B$, we could have used
$\Pol(\bA)$ instead of $\Pol_1(\bA)$, and then our discussion would not be
limited to unary algebras.  However, we are mainly concerned with
congruence lattices, so we lose nothing by restricting the scope in this way.)%   Also,
% later sections of the paper will be solely concerned with unary algebras, so
% for consistency we define $\bB$ to be unary in this section as well.)

P\'eter \Palfy\ and
Pavel \Pudlak\
prove in~\cite[Lemma~1]{Palfy:1980} that
the restriction mapping $\resB$, defined on $\Con\bA$ by
$\alpha\resB = \alpha \cap B^2$, is a lattice epimorphism of $\Con\bA$ onto $\Con\bB$.
In~\cite{McKenzie:1983}, Ralph McKenzie
developed the foundations of what would become tame congruence theory, and
the \PP\ lemma played a seminal role in this development.  In his presentation
of the lemma, McKenzie introduced the mapping $\hatmap$ defined on $\Con\bB$ as follows:
\[
%bd\widehat{\beta} = \{(x,y) \in A^2 \suchthat  \text{ for all }\, f\in \Pol(\bA), \;
%(ef(x), ef(y))\in \beta\}.
\widehat{\beta} = \{(x,y) \in A^2 \suchthat  \text{ for all } f\in \Pol(\bA), \,
(ef(x), ef(y))\in \beta\}.
\]
Throughout this paper, we use a definition of $\hatmap$ that is effectively
the same.  Whenever $\bA = \< A, \dots\>$
 and $\bB = \< B, \dots\>$ are algebras with $B = e(A)$ for some
$e^2 = e \in \Pol_1(\bA)$,  we take the map
$\hatmap %bd :
\colon \Con\bB \rightarrow \Con\bA$ to mean
\begin{equation}
  \label{eq:hatmap}
%bd\widehat{\beta} = \{(x,y) \in A^2 \suchthat \text{ for all }\,
%f\in \Pol_1(\bA), \; (ef(x), ef(y))\in \beta \}.
\widehat{\beta} = \{(x,y) \in A^2 \suchthat \text{ for all }
f\in \Pol_1(\bA), \, (ef(x), ef(y))\in \beta \}.
\end{equation}
It is not hard to see that $\hatmap$ maps $\Con\bB$ into $\Con\bA$.  For
example, if $(x,y) \in \widehat{\beta}$ and $g\in \Pol_1(\bA)$, then for all $f\in \Pol_1(\bA)$ we
have $(efg(x),efg(y)) \in \beta$, so $(g(x),g(y))\in \widehat{\beta}$.

For each $\beta \in \Con\bB$, let $\beta^* = \Cg^\bA(\beta)$.  That is,
$^* %bd : - all later occurrences fixed but not flagged 
\colon \Con\bB \rightarrow \Con\bA$ is
the congruence generation operator restricted to the set $\Con\bB$.
The following lemma concerns the three mappings, $\resB$, $\hatmap$, and $^*$.
The third statement of the lemma, which follows from the first two,
will be useful in the later sections of the paper.

\begin{lemma}\ %bd replaced ~, which changes vertical space.
%bd~
\begin{enumerate}[\rm(i)] %bd \rm added
  \item \label{item:residlemma-i} $^*\colon \Con\bB \rightarrow \Con\bA$ is a residuated mapping with
    residual $\resB$.
  \item \label{item:residlemma-ii} $\resB \colon  \Con\bA \rightarrow \Con\bB$ is a residuated mapping with
    residual $\hatmap$.
\item \label{item:residlemma-iii} For all $\alpha \in \Con\bA$, for all $\beta \in \Con\bB$,
\[
\beta = \alpha\resB \quad \Longleftrightarrow  \quad
\beta^* \leq \alpha \leq \widehat{\beta}.
\]
In particular,
$\beta^*\resB = \beta = \widehat{\beta}\resB$.
  \end{enumerate}
\end{lemma}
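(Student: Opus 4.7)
The plan is to establish (i) and (ii) as two Galois-style adjunctions and then read off (iii) in essentially one line by concatenating them. Recall that a monotone map $f\colon P\to Q$ of posets is \emph{residuated} with residual $g\colon Q\to P$ precisely when $f(p)\leq q \Longleftrightarrow p\leq g(q)$; so what (i) and (ii) ask us to prove are the biconditionals
\[
\beta^*\leq\alpha \;\Longleftrightarrow\; \beta\leq\alpha\resB
\qquad\text{and}\qquad
\alpha\resB\leq\beta \;\Longleftrightarrow\; \alpha\leq\hbeta
\]
for all $\alpha\in\Con\bA$ and $\beta\in\Con\bB$.

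Statement (i) is essentially definitional. The forward direction just intersects $\beta^*\leq\alpha$ with $B^2$ and uses $\beta\subseteq\beta^*\cap B^2$; the converse uses that $\beta^*$ is by definition the least $\bA$-congruence containing $\beta$, so $\beta\subseteq\alpha\resB\subseteq\alpha$ forces $\beta^*\leq\alpha$. I anticipate no difficulty here.

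The substantive content of the lemma is in (ii). For the implication $\alpha\resB\leq\beta\Rightarrow\alpha\leq\hbeta$, the plan is: given $(x,y)\in\alpha$ and $f\in\Pol_1(\bA)$, note that $ef\in\Pol_1(\bA)$ by closure under composition, so $(ef(x),ef(y))\in\alpha$; both coordinates lie in $B=e(A)$, whence this pair sits in $\alpha\resB\leq\beta$, giving $(x,y)\in\hbeta$. The reverse implication is cleaner still: for $(x,y)\in\alpha\resB$ we have $x,y\in B$, so $e(x)=x$ and $e(y)=y$, and instantiating the defining condition of $\hbeta$ at $f=\id$ yields $(x,y)\in\beta$. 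The one place that requires care is remembering that $ef\in\Pol_1(\bA)$, which I regard as the main (mild) obstacle and the key leverage point of the argument.

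For (iii), the equality $\beta=\alpha\resB$ is the conjunction of $\beta\leq\alpha\resB$ and $\alpha\resB\leq\beta$, which (i) and (ii) translate into $\beta^*\leq\alpha$ and $\alpha\leq\hbeta$ respectively. To extract the ``in particular'' identities $\beta^*\resB=\beta=\hbeta\resB$, I need the sandwich $\beta^*\leq\hbeta$; I would establish it by first checking directly that $\beta\subseteq\hbeta$ (for $(x,y)\in\beta$ and $f\in\Pol_1(\bA)$, the restriction $ef\resB$ belongs to $F_B$, so $(ef(x),ef(y))\in\beta$ because $\beta\in\Con\bB$), and then promoting this to $\beta^*\leq\hbeta$ via (i). Instantiating (iii) at $\alpha=\beta^*$ and at $\alpha=\hbeta$ then delivers both identities at once, completing the argument.
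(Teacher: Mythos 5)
Your proposal is correct and follows essentially the same route as the paper: both (i) and (ii) are reduced to the adjunction biconditionals $\beta^*\leq\alpha\iff\beta\leq\alpha\resB$ and $\alpha\resB\leq\beta\iff\alpha\leq\hbeta$, verified exactly as you describe, and (iii) is obtained by conjoining them. The one place you go beyond the paper's text is the ``in particular'' clause, where you explicitly supply the needed fact $\beta\subseteq\hbeta$ (hence $\beta^*\leq\hbeta$, so the interval $[\beta^*,\hbeta]$ is nonempty before you instantiate $\alpha=\beta^*$ and $\alpha=\hbeta$); the paper compresses this into ``combining (2.1) and (2.2),'' and your verification via $ef\resB\in F_B$ is exactly the right extra step, since the two adjunctions alone would not force $\beta^*\resB\leq\beta$.
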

\begin{proof}
  We first recall the definition of {\it residuated mapping}.  If $X$ and $Y$
  are partially ordered sets, and if
$f\colon  X \rightarrow Y$ and
$g\colon  Y \rightarrow X$ are order preserving maps, then the following are
equivalent:
\begin{enumerate}[(a)]
\item $f\colon  X \rightarrow Y$ is a {\it residuated mapping} with {\it residual}
$g\colon  Y \rightarrow X$;
\item %bd$(\forall  x\in X)\, (\forall y\in Y) \; f(x) \leq y \; \Longleftrightarrow
%  \; x \leq g(y)$;
$(\forall  x\in X)(\forall y\in Y) \, f(x) \leq y \iff x \leq g(y)$;
\item $g\circ f \geq \id_X$ and $f\circ g \leq \id_Y$,
\end{enumerate}
where $\id_S$ denotes the identity map on the set $S$.
The definition says that for each $y\in Y$ there is a unique
$x\in X$ that is maximal with respect to the property $f(x) \leq y$, and the
maximum is given by $x = g(y)$.
Thus, (\ref{item:residlemma-i}) is equivalent to:
$\forall  \alpha \in \Con\bA,\, \forall  \beta \in \Con\bB$,
\begin{equation}
  \label{eq:1}
%bd\beta^* \leq \alpha \; \Longleftrightarrow \; \beta \leq \alpha\resB.
\beta^* \leq \alpha \iff \beta \leq \alpha\resB.
\end{equation}
This is easily verified, as follows:  If
$\beta^* \leq \alpha$ and $(x,y)\in \beta$, then
$(x,y) \in \beta^* \leq \alpha$
and $(x,y) \in B^2$, so $(x,y)\in
\alpha\resB$.  If $\beta \leq \alpha\resB$ then
$\beta^* \leq (\alpha\resB)^* \leq \Cg^\bA(\alpha) = \alpha$.

Statement (\ref{item:residlemma-ii}) is equivalent to:
$\forall \alpha \in \Con\bA, \, \forall \beta \in \Con\bB$,
\begin{equation}
  \label{eq:2}
%bd\alpha\resB\leq \beta \; \Longleftrightarrow \; \alpha \leq \widehat{\beta}.
\alpha\resB\leq \beta \iff \alpha \leq \widehat{\beta}.
\end{equation}
This is also easy to check.  For, suppose
$\alpha\resB\leq \beta$ and $(x,y)\in \alpha$. Then $(ef(x), ef(y)) \in \alpha$
for all $f \in \Pol_1(\bA)$ and $(ef(x), ef(y)) \in B^2$, therefore,
$(ef(x), ef(y)) \in \alpha\resB \leq \beta$, so $(x,y) \in \widehat{\beta}$.
Suppose $\alpha \leq \widehat{\beta}$ and $(x,y) \in \alpha\resB$.
Then $(x,y) \in \alpha \leq  \widehat{\beta}$, so
$(ef(x), ef(y)) \in \beta$ for all $f\in \Pol_1(\bA)$, including $f=\id_A$, so
$(e(x), e(y)) \in \beta$. But $(x, y) \in B^2$, so $(x, y) = (e(x), e(y)) \in
\beta$.

Combining~(\ref{eq:1}) and~(\ref{eq:2}), we obtain statement (\ref{item:residlemma-iii}) of the lemma.
\end{proof}

The lemma above was inspired by the two approaches to
proving~\cite[Lemma~1]{Palfy:1980}.  In the original paper $^*$ is used, while
McKenzie uses the $\hatmap$ operator.  Both $\beta^*$ and
$\widehat{\beta}$ are mapped onto $\beta$ by the restriction map $\resB$, so
the restriction map is indeed onto $\Con\bB$.
However, our lemma emphasizes the fact that the interval
\[
[\beta^*, \widehat{\beta}] =
\{\alpha \in \Con\bA \suchthat \beta^* \leq \alpha \leq \widehat{\beta}\}
\]
is precisely the set of congruences for
which $\alpha\resB = \beta$.  In other words, the
$\resB$-inverse image of $\beta$ is
$[\beta^*, \widehat{\beta}]$.
This fact plays a central role in the
theory developed in this  paper.
For the sake of completeness, we conclude this section by
verifying that~\cite[Lemma~1]{Palfy:1980} can be obtained from the lemma above.
\begin{corollary}%bd [cf.~Lemma 1 of \cite{Palfy:1980}]
[cf.~{\cite[Lemma 1]{Palfy:1980}}]
The mapping  $\resB \colon  \Con\bA \rightarrow \Con\bB$ is onto and preserves meets and joins.
\end{corollary}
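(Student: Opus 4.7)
The plan is to derive all three assertions directly from the lemma, using the standard fact that in any Galois connection (i.e., residuated pair) the left adjoint preserves joins and the right adjoint preserves meets.

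First I would handle surjectivity. Part~(\ref{item:residlemma-iii}) of the lemma gives $\beta^* \resB = \beta$ for every $\beta \in \Con\bB$, which exhibits a preimage of $\beta$ under $\resB$ and shows immediately that $\resB \colon \Con\bA \to \Con\bB$ is onto.

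Next I would handle the preservation of joins. By part~(\ref{item:residlemma-ii}) of the lemma, $\resB$ is itself a residuated mapping (with residual $\hatmap$). Any residuated mapping is a lower adjoint in a Galois connection, and hence preserves all existing joins; explicitly, for $\{\alpha_i\} \subseteq \Con\bA$ and $\beta \in \Con\bB$, the equivalence in~(\ref{eq:2}) yields $\bigl(\Join_i \alpha_i\bigr)\resB \leq \beta \iff \Join_i \alpha_i \leq \hbeta \iff (\forall i)\,\alpha_i \leq \hbeta \iff (\forall i)\,\alpha_i\resB \leq \beta \iff \Join_i (\alpha_i\resB) \leq \beta$, so $\bigl(\Join_i \alpha_i\bigr)\resB = \Join_i (\alpha_i\resB)$.

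Dually, for meets I would invoke part~(\ref{item:residlemma-i}): since $^*$ is residuated with residual $\resB$, the map $\resB$ is itself an upper adjoint, and hence preserves all existing meets. The argument is symmetric to the one above, using~(\ref{eq:1}) in place of~(\ref{eq:2}). No step here should present any real obstacle; the only thing to be careful about is citing the correct part of the lemma for each half (joins from~(\ref{item:residlemma-ii}), meets from~(\ref{item:residlemma-i})), since $\resB$ plays the role of the upper adjoint in one pair and the lower adjoint in the other.
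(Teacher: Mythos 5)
Your proof is correct, and for surjectivity and join-preservation it follows the paper's own route: the paper likewise gets ontoness from the fact that everything in $[\beta^*,\widehat{\beta}]$ restricts to $\beta$, and it proves join-preservation by exactly the mechanism you describe, applying the equivalence~(\ref{eq:2}) (i.e., part~(\ref{item:residlemma-ii}) of the lemma) to reduce $\bigl(\Join\theta\bigr)\resB \leq \Join\theta\resB$ to a statement that holds termwise. The one place you genuinely diverge is meet-preservation: the paper simply declares this ``obvious,'' the implicit reason being the set-theoretic identity $\bigl(\bigcap\alpha_i\bigr)\cap B^2 = \bigcap\bigl(\alpha_i\cap B^2\bigr)$ together with the fact that meet in a congruence lattice is intersection, whereas you derive it from part~(\ref{item:residlemma-i}) by viewing $\resB$ as the residual (upper adjoint) of $^*$. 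Both arguments are valid; yours is more uniform in that all three claims flow from the adjunctions in the lemma, while the paper's is more elementary for that step and does not actually use part~(\ref{item:residlemma-i}) in the corollary's proof at all. Your closing caution about which adjunction $\resB$ sits on which side of is exactly the right thing to keep straight, and you have it right.
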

\begin{proof}
  Given $\beta\in \Con\bB$, each $\theta\in \Con\bA$ in the interval $[\beta^*,
  \widehat{\beta}]$ is mapped to $\theta\resB = \beta$, so $\resB$ is clearly
  onto.  That $\resB$ preserves meets is obvious, so we just check that $\resB$ is
%   join preserving, note that for all $\eta, \theta \in \Con\bA$, we have
% \[
% \eta\resB \join \theta\resB \leq (\eta \join \theta)\resB,
% \]
% since $\resB$ is order preserving.  The opposite inequality follows
% from~(\ref{eq:2}) above. For,
% \[
% (\eta \join \theta)\resB \leq \eta\resB \join \theta\resB
% \quad \Leftrightarrow \quad
% %%jp \eta \join \theta \leq \widehat{\eta\resB \join \theta\resB},
% %% Shifted widehat from over to after. Is this ok?
% \eta \join \theta \leq (\eta\resB \join \theta\resB) \widehat{\phantom{x}},
% %%wjd: yes, this is fine (and I'll change it below as well).
% \]
% and the second inequality holds since, by~(\ref{eq:2}) again,
% \[
% \eta \leq (\eta\resB \join \theta\resB)\widehat{\phantom{x}}
% \quad \Leftrightarrow \quad
% \eta\resB \leq \eta\resB \join \theta\resB
% \]
% and
% \[
% \theta \leq (\eta\resB \join \theta\resB)\widehat{\phantom{x}}
% \quad \Leftrightarrow \quad
% \theta\resB \leq \eta\resB \join \theta\resB.
% \]
% The same argument works for infinite meets and joins.
%
%
%%% NEW VERSION (employing infinite joins):
  join preserving.  Since $\resB$ is order preserving, we have, for all 
$S \subseteq \Con\bA$,
\[
\Join \theta\resB \leq \bigl(\Join \theta\bigr)\resB,
\]
where joins are over all $\theta \in S$.  The opposite inequality follows
from~(\ref{eq:2}) above. Indeed, by~(\ref{eq:2}) we have
\[
\bigl(\Join \theta\bigr)\resB \leq \Join \theta\resB
\quad \Longleftrightarrow \quad
\Join \theta \leq \bigl(\Join \theta\resB \bigr)\!\widehat{\phantom{X}}
%%wjd: yes, this is fine (and I'll change it below as well).
\]
and the last inequality holds by another application of~(\ref{eq:2}): if $\eta \in S$, then
\[
\eta \leq \bigl(\Join \theta\resB\bigr)\!\widehat{\phantom{X}}
\quad \Longleftrightarrow \quad
\eta\resB \leq \Join \theta\resB.
\]

\end{proof}

%%%%%%%%%%%%%%%%%%%%%
\section{Overalgebras}
\label{sec:overalgebras}
In the previous section, we started with an algebra $\bA$ and
considered a subreduct $\bB$ with universe $B = e(A)$, the image of an
idempotent unary polynomial of $\bA$.  In this section, we start with a
fixed finite algebra $\bB = \<B, \dots \>$ and consider various ways to
construct an \emph{overalgebra}, that is, an algebra $\bA= \<A, F_A\>$ having
$\bB$ as a subreduct where $B = e(A)$ for some $e^2 = e \in F_A$.
Beginning with a specific finite algebra $\bB$, our goal is to understand what
congruence lattices $\Con\bA$ can be built up from
$\Con\bB$ by expanding the algebra $\bB$ in this way.

%%%%%%%%%%%
\subsection{Overalgebras I}
\label{sec:overalgebras-i}
Let $B$ be a finite set and let $F$ be a set of unary maps that take $B$
into itself. Consider the algebra $\bB = \<B, F\>$ with universe $B$ and basic operations $F$.
Let $T$ denote a finite sequence $t_1, t_2, \dots, t_K$ of elements of $B$ (possibly with repetitions).
For each $1\leq i \leq K$, let $B_i$ be a set of the same cardinality as $B$
and intersecting $B$ at the $i$-th point of the sequence $T$; that is,
$B_i\cap B = \{t_i\}$.  Assume also that $B_i\cap B_j = \emptyset$ if $t_i\neq t_j$, and
$B_i\cap B_j = \{t_i\}$ if $t_i= t_j$.  Occasionally, for notational convenience, we
use the label $B_0\defeq B$, and for extra clarity we may write
$F_B$ in place of $F$.

For each $1\leq i \leq K$, let $\pi_i\colon  B\rightarrow B_i$ be a bijection that leaves
$t_i$ fixed.  That is $\pi_i(t_i) = t_i$ and otherwise $\pi_i$ is an arbitrary
(but fixed) bijection.  Let $\pi_0 = \id_{B}$, the identity map on $B$.  For
$b\in B$ we often use the label $b^i$ to denote $\pi_i(b)$.  The map $\pi_i$ and
the operations $F$ induce a set $F_i$ of unary operations on $B_i$ as follows:
to each $f\in F$ corresponds the operation $f^{\pi_i} \colon  B_i \rightarrow B_i$
defined by $f^{\pi_i} = \pi_i f \pi_i^{-1}$.
It is easy to see that $\bB_i \defeq  \<B_i, F_i\>$
and $\bB = \<B, F\>$ are isomorphic  algebras.
(Indeed, $\pi_i$ is a bijection of the universes that respects the
interpretation of the basic operations: % \footnote{
% This generalizes to $k$-ary operations if we
% adopt the following convention: $f^{\pi_i}(a_1,\dots, a_k) =
% \pi_if(\pi_i^{-1}(a_1), \dots, \pi_i^{-1}(a_k))$.}
$\pi_if(b) = \pi_i f(\pi_i^{-1}\pi_i b) = f^{\pi_i}(\pi_i b)$.)

Let $A = \bigcup_{i=0}^K B_i$ and for $0\leq k \leq K$ define the maps
$e_k\colon A\rightarrow A$ as follows:
\[
e_0(x) = \pi^{-1}_{\iota x}(x) \quad\text{and}\quad
e_k(x)=\pi_k e_0(x),
\]
where $\iota x$ denotes the smallest index $k$ such that $x\in
B_k$. % \footnote{In order that $\iota$ be well defined, we must take the smallest
  % index; we might have, e.g., $t_1=t_2\in B_1\cap B_2$, in which
  % case $\iota t_1 =\iota t_2 =1$.}
Thus, $e_0$ acts as the identity on 
%bd $B_0 \;(= B)$ 
$B_0$ $(= B)$ and maps each
$b^i\in B_i$ onto the corresponding element $b\in B$.
Similarly, $e_k$ acts as the identity on $B_k$ and maps each
$b^i\in B_i$ onto the corresponding element $b^k\in B_k$.

The next set of maps that we define on $A$ will be based on a given partition of the
elements of $T$.  Actually, since $T$ is a sequence, possibly with repetitions,
we must consider instead a partition  of
the \emph{indices} of $T$.  Let $\sT = |\sT_1|\sT_2|\dots|\sT_N|$ denote this
partition of the index set $\{1, 2, \dots, K\}$.
We now define maps $s_n \colon  A\rightarrow A$
based on the partition $\sT$, as follows: for each $1\leq n\leq N$, let
\begin{equation}
  \label{fn:s_n}
s_n(x) =
 \begin{cases}
   t_i, & \text{if $x \in B_i$ for some $i \in \sT_n$, }\\
   x, & \text{otherwise}.
 \end{cases}
\end{equation}
(Recall, $B_i$ intersects $B$ at the point $t_i$. If the index of this point
belongs to block $\sT_n$, then $s_n$ collapses $B_i$ onto $t_i$; otherwise,
$s_n$ acts as the identity on $B_i$.  A typical special case of our
construction takes the partition $\sT$ to be the trivial
partition consisting of a single block $\sT_1 = 1,2,\dots,K$.  In this case
$N=1$, and the map $s_1$ acts as the identity on $B$ and collapses each set $B_i$
onto the point at which it intersects $B$.)

Finally, letting
\[
F_A \defeq  \{f e_0 \suchthat f\in F_B\} \cup \{e_k \suchthat 0\leq k \leq K\} \cup \{s_n \suchthat 1 \leq n
\leq N\},
\]
we define the algebra $\bA \defeq  \< A, F_A\>$, which we call
an \emph{overalgebra of} $\bB$.
Occasionally, we refer to subsets $B_i\subseteq A$ as the \emph{subreduct
  universes} of $\bA$, we call $\bB$ the
\emph{base algebra} of $\bA$, and $T$ the sequence of \emph{tie-points} of $\bA$.

Before proving some general results about the basic structure of the
congruence lattice of an overalgebra, we present the first example,
discovered by Ralph Freese, of a finite algebra with congruence lattice
isomorphic to the second lattice in  Figure~\ref{fig:sevens}.
(All computational experiments described in this paper rely on
two open source programs, GAP~\cite{GAP4} and the Universal Algebra
Calculator~\cite{uacalc}. Source code for reproducing our results is
available at \url{http://williamdemeo.wordpress.com/software/overalgebras/}.)
\begin{example}
\label{ex:3.1}
Consider a finite permutational algebra $\bB = \<B, F\>$
with congruence lattice $\Con\bB \cong M_4$ (Figure~\ref{fig:ConS3}).
The right regular $S_3$-set---%
%bd i.e.~the group $S_3$
i.e., the group $S_3$
acting on itself by right multiplication---is one such algebra.  In
\GAP,
\begin{small}
\begin{verbatim}
gap> g:=SymmetricGroup(3); # The symmetric group on 3 letters.
gap> B:=[(),(1,2,3),(1,3,2),(1,2),(1,3),(2,3)]; # The elements of g.
gap> G:=Action(g,B,OnRight); # Group([(1,2,3)(4,5,6), (1,4)(2,6)(3,5)])
\end{verbatim}
\end{small}

In our computational examples, we prefer to use 0-offset notation since this is
the convention used by the \uacalc\ software.  Thus, in the present example we define
the universe of the $S_3$-set described above to be
$B = \{0, 1,\dots, 5\}$, instead of $\{1, 2, \dots, 6\}$.
As such, the partitions of $B$ corresponding to nontrivial congruence relations
of the algebra are given as follows:

\begin{small}
\begin{verbatim}
gap> for b in AllBlocks(G) do Print(Orbit(G,b,OnSets)-1,"\n"); od;
[ [ 0, 1, 2 ], [ 3, 4, 5 ] ]
[ [ 0, 3 ], [ 1, 4 ], [ 2, 5 ] ]
[ [ 0, 4 ], [ 1, 5 ], [ 2, 3 ] ]
[ [ 0, 5 ], [ 1, 3 ], [ 2, 4 ] ]
\end{verbatim}
\end{small}

\noindent %bd ok 
Next, we create an algebra
in \uacalc\ format using the two generators of the
 group as basic operations.  This can be accomplished using
our \GAP\ script {\tt gap2uacalc.g} as follows:

\begin{small}
\begin{verbatim}
gap> Read("gap2uacalc.g");
gap> gset2uacalc([G,"S3action"]);
\end{verbatim}
\end{small}

\noindent %bd ok
This creates a \uacalc\ file %{\tt S3action.ua}
specifying an algebra which has universe $B = \{0, 1, \dots, 5\}$ and two
basic unary operations
$g_0 = (1\;  2 \; 0\;  4\;  5\;  3)$
and $g_1 = (3\;  5\; 4\; 0\;  2\;  1)$.
These operations are the permutations $(0,1,2)(3,4,5)$ and $(0,3)(1,5)(2,4)$,
which are (0-offset versions of) the generators
of the $S_3$-set appearing in the \GAP\ output above.
Figure~\ref{fig:ConS3} displays the congruence lattice of this algebra.

\begin{figure}[h!]
  \centering
    \begin{tikzpicture}[scale=.9]
      \node (250) at (2.5,0)  [draw, circle, inner sep=\dotsize] {};
      \node (253) at (2.5,3)  [draw, circle, inner sep=\dotsize] {};
      \foreach \i in {1,2, 3, 4}
      {
        \node (\i15) at (\i,1.5)  [draw, circle, inner sep=\dotsize] {};
        \draw[semithick] (250) to (\i15) to (253);
      }
      \draw (115) node [left] {$\alpha$};
      \draw (215) node [left] {$\beta$};
      \draw (315) node [right] {$\gamma$};
      \draw (415) node [right] {$\delta$};
      \draw (2.8,3.15) node {$1_{B}$};
      \draw (2.83,-.15) node {$0_{B}$};
    \end{tikzpicture}
  \caption{Congruence lattice of the right regular $S_3$-set, where congruences
    %bd$\alpha, \,\beta,\, \gamma$, 
    $\alpha$, $\beta$, $\gamma$,
    and $\delta$ correspond to the partitions
%bd    $| 0, 1, 2 | 3, 4, 5|, \,| 0, 3 | 1, 4 | 2, 5 |, \,| 0, 4| 1, 5| 2, 3|$, 
        $| 0, 1, 2 | 3, 4, 5|$, $| 0, 3 | 1, 4 | 2, 5 |$, $| 0, 4| 1, 5| 2, 3|$,
        and
    $| 0, 5| 1, 3| 2, 4|$, respectively.
  }
  \label{fig:ConS3}
\end{figure}
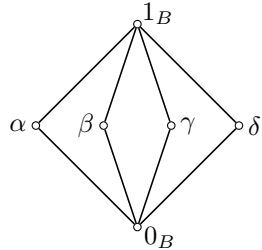

We now construct an overalgebra that ``doubles'' the congruence
$\alpha$ corresponding to the partition $| 0, 1, 2 | 3, 4, 5|$. % \footnote{From
  % now on, we will identify a congruence with its corresponding partition
  % and write, for example, $\alpha = | 0, 1, 2 | 3, 4, 5|$.
  % Although these objects are not identical, no confusion should result from
  % this.}
(From now on, we will identify a congruence relation with the corresponding
partition of the underlying set, and write, for example, $\alpha = | 0, 1, 2 |
3, 4, 5|$.) 
Note that $\alpha$ can be generated by the pair $(0,2)$---that is, $\alpha =
\Cg^\bB(0,2)$---and
the desired doubling of $\alpha$ can be achieved by
choosing the tie-point sequence $t_1, t_2 = 0, 2$ in the overalgebra
construction.
(Theorem~\ref{thm1} below will reveal why this choice works.  Note that, 
in this simple example, no non-trivial partition of the
  tie-point indices is required.)

Our \GAP\ function {\tt Overalgebra} carries out the construction, and is
invoked as follows:

\begin{small}
\begin{verbatim}
gap> Read("Overalgebras.g");
gap> Overalgebra([G, [0,2]]);
\end{verbatim}
\end{small}

\noindent %bd ok
This results in an
algebra with universe $A = B \cup B_1 \cup B_2 = \{ 0, 1, 2, 3, 4, 5\} \cup
\{0, 6, 7, 8, 9, 10\} \cup\{ 11, 12, 2, 13, 14, 15\}$,
and the following operations:

\smallskip %bd added
%bd\\[-5pt]
\begin{center}
\begin{tabular}{c|r|r|r|r|r|r|r|r|r|r|r|r|r|r|r|r}
 &0&1&2&3&4&5&6&7&8&9&10&11&12&13&14&15\\
\hline
$e_0$ & 0& 1& 2& 3& 4& 5& 1& 2& 3& 4& 5& 0& 1& 3& 4& 5\\
$e_1$ & 0& 6& 7& 8& 9& 10& 6& 7& 8& 9& 10& 0& 6& 8& 9& 10\\
$e_2$ &11& 12& 2& 13& 14& 15& 12& 2& 13& 14& 15& 11& 12& 13& 14& 15\\
$s_1$  & 0& 1& 2& 3& 4& 5& 0& 0& 0& 0& 0& 2& 2& 2& 2& 2\\
$g_0 e_0$&1 &2 &0 &4 &5 &3 &2 &0 &4 &5 &3 &1 &2 &4 &5 &3\\
$g_1 e_0$& 3 &5 &4 &0 &2 &1 &5 &4 &0 &2 &1 &3 &5 &0 &2 &1
\end{tabular}
\end{center}
\smallskip %bd added
%bd~\\[4pt]
\noindent %bd ok
If %$A = \{0, 1, \dots, 15\}$ and
$F_A=\{e_0, e_1, e_2, s_1, g_0 e_0, g_1 e_0\}$, then the
algebra $\<A, F_A\>$ has the congruence lattice shown in Figure~\ref{fig:OverAlgebra-S3-0-2}.
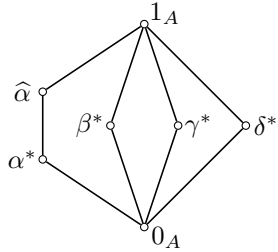
\begin{figure}[h!]
  \centering
    \begin{tikzpicture}[scale=.9]
      \node (70) at (6.5,0)  [draw, circle, inner sep=\dotsize] {};
      \node (71) at (7,1.5)  [draw, circle, inner sep=\dotsize] {};
      \node (73) at (6.5,3)  [draw, circle, inner sep=\dotsize] {};
      \node (61) at (6,1.5)  [draw, circle, inner sep=\dotsize] {};
      \node (51) at (5,1)  [draw, circle, inner sep=\dotsize] {};
      \node (52) at (5,2)  [draw, circle, inner sep=\dotsize] {};
      \node (81) at (8,1.5)  [draw, circle, inner sep=\dotsize] {};
      \draw[semithick]
      (70) to (51) to (52) to (73)
      (70) to (61) to (73)
      (70) to (71) to (73)
      (70) to (81) to (73);
      \draw (4.7,1) node {$\alpha^*$};
      \draw (4.7,2) node {$\widehat{\alpha}$};
      \draw (5.7,1.5) node {$\beta^*$};
      \draw (7.3,1.5) node {$\gamma^*$};
      \draw (8.3,1.5) node {$\delta^*$};
      \draw (6.8,3.15) node {$1_A$};
      \draw (6.83,-.15) node {$0_A$};
    \end{tikzpicture}
  \caption{Congruence lattice of the overalgebra of the $S_3$-set with
    intersection points 0 and 2.}
  \label{fig:OverAlgebra-S3-0-2}
\end{figure}

The congruence relations in Figure~\ref{fig:OverAlgebra-S3-0-2} are as
follows:
 \begin{align*}
 \widehat{\alpha} &=|0,1,2,6,7,11,12|3,4,5|8,9,10,13,14,15| \\
 \alpha^* &=|0,1,2,6,7,11,12|3,4,5|8,9,10|13,14,15| \\
 \beta^*&=|0,3,8|1,4|2,5,15|6,9|7,10|11,13|12,14| \\
 \gamma^*&=|0,4,9|1,5|2,3,13|6,10|7,8|11,14|12,15| \\
 \delta^*&=|0,5,10|1,3|2,4,14|6,8|7,9,11,15|12,13|.
 \end{align*}

It is important to note that the resulting congruence lattice depends
on our choice of which congruence to ``expand,'' which is controlled by
our specification of the tie-points of the overalgebra.
For example, suppose we want one of the congruences having three
blocks, say, $\beta = \Cg^\bB(0,3) =| 0, 3 | 1, 4 | 2, 5 |$, to have a
non-trivial $\resB$-inverse image $[\beta^*, \widehat{\beta}]$.  Then we could
specify the sequence $t_1, t_2$ to be $0, 3$ by invoking
the command
{\tt Overalgebra([G, [0,3]])}. (In this instance we arrive at the same congruence
  lattice if we let $t_1, t_2$ be $2, 5$ or $1, 4$.)
This produces an overalgebra with universe
%bd presented
%$A = B \cup B_1 \cup B_2
% = \{0, 1,  2,  3,  4,  5\} \cup \{ 0, 6,  7,  8,  9, 10\} \cup
%\{11, 12, 13, 3, 14, 15\}$
\[
A = B \cup B_1 \cup B_2
 = \{0, 1,  2,  3,  4,  5\} \cup \{ 0, 6,  7,  8,  9, 10\} \cup
\{11, 12, 13, 3, 14, 15\}
\]
and congruence lattice shown in Figure~\ref{fig:OverAlgebra-S3-0-3},
where
 \begin{align*}
 \alpha^* &=|0,1,2,6,7|3,4,5,14,15|8,9,10|11,12,13| \\
 \widehat{\beta} &=|0,3,8,11|1,4|2,5|6,9,12,14|7,10,13,15| \\
 \beta_{\eps}&=|0,3,8,11|1,4|2,5|6,9,12,14|7,10|13,15| \\
 \beta_{\eps'}&=|0,3,8,11|1,4|2,5|6,9|7,10,13,15|12,14| \\
 \beta^*&=|0,3,8,11|1,4|2,5|6,9|7,10|12,14|13,15| \\
 \gamma^*&=|0,4,9|1,5|2,3,13|6,10|7,8|11,14|12,15| \\
 \delta^*&=|0,5,10|1,3,12|2,4|6,8|7,9|11,15|13,14|.
 \end{align*}

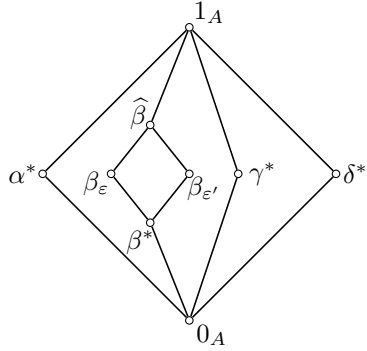
\begin{figure}[h!]
  \centering
    \begin{tikzpicture}[scale=1.3]
      \node (70) at (6.5,0)  [draw, circle, inner sep=\dotsize] {};
      \node (71) at (7,1.5)  [draw, circle, inner sep=\dotsize] {};
      \node (73) at (6.5,3)  [draw, circle, inner sep=\dotsize] {};
      \node (61) at (6.1,1)  [draw, circle, inner sep=\dotsize] {};
      \node (62) at (6.1,2)  [draw, circle, inner sep=\dotsize] {};
      \node (63) at (5.7,1.5)  [draw, circle, inner sep=\dotsize] {};
      \node (64) at (6.5,1.5)  [draw, circle, inner sep=\dotsize] {};
      \node (51) at (5,1.5)  [draw, circle, inner sep=\dotsize] {};
      \node (81) at (8,1.5)  [draw, circle, inner sep=\dotsize] {};
      \draw[semithick]
      (70) to (51) to (73)
      (70) to (61) to (63) to (62) to (73)
      (61) to (64) to (62)
      (70) to (71) to (73)
      (70) to (81) to (73);
      \draw (4.8,1.5) node {$\alpha^*$};
%      \draw[font=\small] (6,2.1) node {$\widehat{\beta}$};
      \draw (5.98,2.12) node {$\widehat{\beta}$};
      \draw (5.55,1.4) node {$\beta_\eps$};
      \draw (6.65,1.35) node {$\beta_{\eps'}$};
      \draw (6,.8) node {$\beta^*$};
      \draw (7.25,1.5) node {$\gamma^*$};
      \draw (8.2,1.5) node {$\delta^*$};
      \draw (6.7,3.15) node {$1_A$};
      \draw (6.73,-.15) node {$0_A$};
    \end{tikzpicture}
  \caption{Congruence lattice of the overalgebra of the $S_3$-set with
    intersection points 0 and 3.}
  \label{fig:OverAlgebra-S3-0-3}
\end{figure}

\end{example}

%%%%% INSERT
Before proceeding, we require some more notation.  Given a
set $X$, we let $\Eq(X)$ denote the set of equivalence relations on the set $X$, and
we let $\one, \two$, and $\three$ denote the abstract one, two,
and three element lattices, respectively. For example, $\Eq(1) \cong
\one$ and $\Eq(2) \cong \two$.  Also, let us agree that
$\Eq(0) \cong \one$.

Below we present a theorem that describes the basic structure of the congruence
lattice of an overalgebra constructed as described in this
section.  In particular, the theorem explains why the interval
$[\alpha^*, \widehat{\alpha}]\cong\two $ appears in the first example above,
while $[\beta^*, \widehat{\beta}]\cong\two\times \two $ appears
in the second.

If $\beta \in \Con \bB$ and if we
denote by $C_1, \dots, C_m$ the congruence classes of $\beta$, then
for each $1\leq i\leq K$ the isomorphism $\pi_i$ yields a corresponding
congruence relation $\beta^i \in \Con \bB_i$, which partitions the set
$B_i$ into congruence classes $\pi_i(C_1), \dots, \pi_i(C_m)$.  We
denote the $r$-th class of $\beta^i$ by $C_r^i=\pi_i(C_r)$.

If $t_1, t_2, \dots, t_K$ is the sequence of tie-points on which an
overalgebra is based,
we let $\sI_r = \{i \mid t_i \in C_r\}$ denote the indices of those
tie-points that lie in the $r$-th congruence class of $\beta$.
Note that if $i\in \sI_r$, then the $r$-th congruence class of
$\beta^i$ intersects the $r$-th congruence class of $\beta$ at the tie-point
$t_i$, and we have $\{t_i\} = C_r^i\cap C_r$, %$\subseteq T \cap C_r$,
and $t_i/\beta^i = C_r^i$.  Finally, %for notational convenience,
we let $\beta^{0}$ denote $\beta$.
\begin{theorem}
\label{thm1}
Let $\bB$ be a finite unary algebra and let $\bA$ be the overalgebra of $\bB$ constructed,
as described above, from the sequence $t_1, t_2, \dots, t_K$ and the partition $|\sT_1|\sT_2|
\cdots|\sT_N|$ of the indices $\{1, 2, \dots, K\}$.
Given $\beta \in \Con \bB$ with congruence classes $C_1, \dots, C_m$, define the relations
  \begin{equation}
    \label{eq:star}
\beta^\star= \bigcup_{k=0}^K \beta^{k} \cup \bigcup_{r=1}^m
\bigl(C_r \cup \bigcup_{i\in \sI_r} C_r^i \bigr)^2,
  \end{equation}
and
  \begin{equation}
    \label{eq:betahat}
    \tbeta =
    \beta^\star \cup
    \bigcup_{n=1}^N
    \bigcup_{r=1}^m
    \bigcup^m_{\begin{subarray}{l}\ell=1 \\\ell \neq r \end{subarray}}
    \bigl(\bigcup_{i \in \sT_n \cap \sI_r} C^i_{\ell}\bigr)^2.
  \end{equation}
Then,
\begin{enumerate}[\rm(i)] %bd \rm added
\item
$\beta^\star=\beta^*$, the minimal $\theta\in \Con\bA$ such that $\theta\resB = \beta$;
\item $\tbeta=\widehat{\beta}$, the maximal $\theta \in \Con\bA$ such that $\theta\resB = \beta$;
\item
\label{item-iii}
the interval $[\beta^\star, \tbeta]$ in $\Con\bA$
  contains every equivalence relation on $A$ between $\beta^\star$ and $\tbeta$,
and satisfies
  \begin{equation}
    \label{eq:item-iii}
          [\beta^\star, \tbeta]
          =
          \{\theta \in \Eq(A) \suchthat \beta^\star \subseteq \theta \subseteq \tbeta \}
          \cong \prod_{r=1}^m \prod_{n=1}^N (\Eq |\sT_n \cap \sI_r|)^{m-1}.
  \end{equation}
\end{enumerate}
\end{theorem}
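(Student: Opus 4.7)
The plan is to prove items (i) and (ii) directly by checking that the explicit relations $\beta^\star$ and $\tbeta$ satisfy the extremal properties characterizing $\beta^*$ and $\widehat{\beta}$, and then to deduce (iii) from the residuation lemma. That lemma guarantees that the interval $[\beta^*, \widehat{\beta}] \subseteq \Con\bA$ consists exactly of the congruences $\alpha$ with $\alpha\resB = \beta$, so (iii) will break into (a) describing this interval combinatorially as a set of equivalence relations on $A$, and (b) verifying that every such equivalence is in fact a congruence.

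For (i), I would first enumerate the $\beta^\star$-classes: the ``home'' classes $H_r = C_r \cup \bigcup_{i \in \sI_r} C_r^i$ and the ``foreign'' classes $C_\ell^i$ with $i \in \sI_r$ and $\ell \neq r$. A direct check gives $\beta^\star \resB = \beta$, since each foreign $C_\ell^i$ is disjoint from $B$ while each $H_r$ meets $B$ exactly in $C_r$. Closure of $\beta^\star$ under the basic operations is a short case analysis: $e_k$ transports a $\beta^\star$-class into a single $\beta^\star$-class via the isomorphism $\pi_k \pi_i^{-1}$; $s_n$ collapses each $B_i$ with $i \in \sT_n$ onto the tie-point $t_i$, which sits inside the appropriate home class; and $fe_0$ factors through $B$, where $\beta$ does the work. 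For minimality, if $\theta \in \Con\bA$ restricts to $\beta$, then applying $e_k$ to a generator $(x, y) \in \beta$ gives $(x^k, y^k) \in \beta^k$, so each $\beta^k \subseteq \theta$, and applying $e_i$ to a pair $(t_i, x)$ with $x \in C_r$ and $i \in \sI_r$ yields $(t_i, x^i)$, which absorbs $C_r^i$ into the $\theta$-class of $t_i$. These derivations together produce every generator of $\beta^\star$.

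For (ii), the same bookkeeping shows $\tbeta \in \Con\bA$ and $\tbeta \resB = \beta$, so $\tbeta \leq \widehat{\beta}$. For the reverse inequality, I would take $\theta \in \Con\bA$ with $\theta\resB = \beta$ and a pair $(x, y) \in \theta$. Applying $e_0$ places $e_0(x), e_0(y)$ in a common $\beta$-class $C_\ell$, so $x \in C_\ell^i$ and $y \in C_\ell^j$ for some $i, j$ and common $\ell$; if $\ell$ is the home index for both $i$ and $j$, the pair already lies in $\beta^\star \subseteq \tbeta$. In the remaining cases, applying suitable collapsing maps $s_n$ (possibly in succession) and using $\theta \cap B^2 = \beta$ forces the tie-points $t_i, t_j$ to be $\beta$-related and pins $i, j$ to a common block $\sT_n \cap \sI_r$ with $\ell \neq r$, which is exactly the condition for $(x, y)$ to lie in an added pair of $\tbeta$.

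For (iii), once the classes of $\beta^\star$ and $\tbeta$ are made explicit, the $\tbeta$-classes that are not already $\beta^\star$-classes each have the form $\bigcup_{i \in \sT_n \cap \sI_r} C_\ell^i$ with $\ell \neq r$, and refining such a block inside $\Eq(A)$ amounts to choosing an equivalence relation on the index set $\sT_n \cap \sI_r$; these choices vary independently across the triples $(n, r, \ell)$, yielding the product description. The main obstacle, and the technical heart of the statement, is showing that \emph{every} equivalence relation $\theta$ with $\beta^\star \subseteq \theta \subseteq \tbeta$ is a congruence, not just the two extremes. The key observation here would be that for each generator $g \in F_A$ and each $(x, y) \in \tbeta$, the image $(g(x), g(y))$ already lies in $\beta^\star$: the idempotent $e_k$ maps the entire coarse block $\bigcup_i C_\ell^i$ into the single set $C_\ell^k$, the map $s_n$ collapses each $B_i$ with $i \in \sT_n$ to the single point $t_i$, and $fe_0$ factors through $B$ where only $\beta$-pairs can survive. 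Since $\beta^\star \subseteq \theta$, closure of $\theta$ under every basic operation is then automatic.
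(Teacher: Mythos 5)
Your proposal follows the same route as the paper's proof in all three parts: direct verification that $\beta^\star$ and $\tbeta$ are congruences restricting to $\beta$, extremality via the residuation lemma together with explicit generation and separation arguments, and a Birkhoff-style interval computation for (iii). The outlines of (i) and (ii) are sound. In (ii) you prove $\widehat{\beta}\leq\tbeta$ by showing that every congruence restricting to $\beta$ is contained in $\tbeta$; this is just the contrapositive of the paper's argument, which exhibits, for each $(x,y)\notin\tbeta$, a unary polynomial $f$ with $(e_0f(x),e_0f(y))\notin\beta$. The case analysis you sketch---one or two applications of the collapsing maps $s_n$ followed by $e_0$---is exactly what is needed, though to make it airtight you must treat separately the case where $i,j$ lie in a common $\sI_r$ but in different $\sT$-blocks (there one application of $s_n$ already separates $t_i$ from $e_0(y)$ modulo $\beta$).

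The one genuine flaw is in (iii). Your ``key observation''---that for every $g\in F_A$ and every $(x,y)\in\tbeta$ the image $(g(x),g(y))$ already lies in $\beta^\star$---is false as stated. Take $(x,y)\in\tbeta\setminus\beta^\star$ with $x\in C_\ell^i$, $y\in C_\ell^j$ and $i,j\in\sT_n\cap\sI_r$, and apply $s_{n'}$ for some $n'\neq n$: since $i,j\notin\sT_{n'}$, the map $s_{n'}$ fixes both $x$ and $y$, so $(s_{n'}(x),s_{n'}(y))=(x,y)$, which is not in $\beta^\star$. The correct statement, which the paper records explicitly at the end of its proof of (ii), is that $(f(x),f(y))\in\beta^\star$ for all $f\in F_A$ \emph{except} when $f=s_{n'}$ acts as the identity on both coordinates. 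The conclusion you want still follows, because in the exceptional case $(f(x),f(y))=(x,y)\in\theta$ anyway; but as written your closure argument for intermediate equivalence relations rests on an assertion that is not true, so you need to add this one-line case split.
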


\begin{remarks}
If a block $C_r$ of $\beta$ contains no tie-point, then
%for all $n$ we have $\sT_n \cap
$\sI_r = \emptyset$. In such cases,
for all $n$,
$\Eq |\sT_n \cap \sI_r| = \Eq |\sI_r| = \Eq (0)
\cong \one$.
  Similarly, if the block $C_r$ contains a single tie-point, then $|\sT_n \cap
  \sI_r|\leq 1$ for all $n$, and
  again the corresponding terms in~(\ref{eq:item-iii}) contribute nothing to the
  product.
In fact, the term of index $(n,r)$ in~(\ref{eq:item-iii}) is nontrivial if and only if there
are at least two tie-points (say, $t_i, t_j$) in block $C_r$ with indices (say, $i, j$) in block $\sT_n$.

Next we remark that the blocks of the relation $\beta^\star$ are, for $1\leq r\leq m$,
\begin{equation}
  \label{eq:betastartclasses}
C_r \cup \bigcup_{i\in \sI_r}C_r^i \quad
\text{and} \quad
% \; \text{ and } \;
C^j_r \quad (1\leq j\leq K, \ j\notin \sI_r).
\end{equation}
The blocks of $\tbeta$ are, for $1\leq r\leq m$ and
$1\leq n \leq N$,
\begin{equation}
  \label{eq:hbetaclasses}
C_r \cup \bigcup_{i\in \sI_r}C_r^i   \quad \text{and} \quad
\bigcup_{j\in \sT_n \cap \sI_r}C^j_s \quad (1\leq s\leq m, \ s\neq r).
\end{equation}
In the special case $N=1$ (the trivial partition), (\ref{eq:hbetaclasses}) is simply
\[
C_r \cup \bigcup_{i\in \sI_r}C_r^i   \quad \text{and} \quad
\bigcup_{j\in \sI_r}C^j_s \quad (1\leq s\leq m, \ s\neq r).
\]
Figures~\ref{fig:overalgebra} and~\ref{fig:overalgebra1} depict the blocks of $\beta^\star$
and $\tbeta$ for a small example illustrating the special case $N=1$.
These diagrams serve as a rough guide to intuition,
and make the proof of Theorem~\ref{thm1} easier to follow.
\end{remarks}

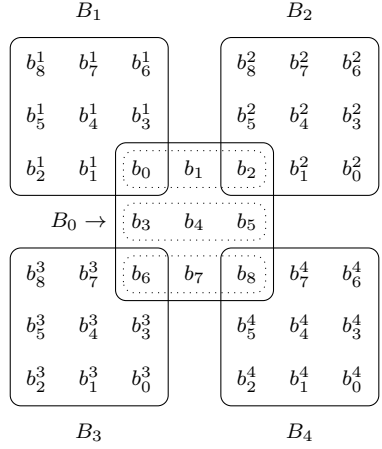
\begin{figure}
  \centering
      {\scalefont{.8}
        \begin{tikzpicture}[scale=.7]
          \draw[rounded corners] (-1.5,-1.5) rectangle (1.5,1.5);
          \draw[rounded corners] (.5,.5) rectangle (3.5,3.5);
          \draw[rounded corners] (.5,-3.5) rectangle (3.5,-.5);
          \draw[rounded corners] (-3.5,-3.5) rectangle (-.5,-.5);
          \draw[rounded corners] (-3.5,.5) rectangle (-.5,3.5);
          \draw[rounded corners, dotted] (-1.35,.65) rectangle (1.35,1.35);
          \draw[rounded corners, dotted] (-1.35,-.35) rectangle (1.35,.35);
          \draw[rounded corners, dotted] (-1.35,-1.35) rectangle (1.35,-.65);
%          \input{Diagram1New.tex}

%%%>>> Begin Diagram1New.tex file:
          %%    % B1
          \draw (-3,1) node {$b^1_2$};
          \draw (-2,1) node {$b^1_1$};
          \draw (-1,1) node {$b_0$};
          \draw (-3,3) node {$b^1_8$};
          \draw (-2,3) node {$b^1_7$};
          \draw (-1,3) node {$b^1_6$};
          \draw (-3,2) node {$b^1_5$};
          \draw (-2,2) node {$b^1_4$};
          \draw (-1,2) node {$b^1_3$};

          \draw ( 0,1) node {$b_1$};

          %%    % B2
          \draw (3,1) node {$b^2_0$};
          \draw (2,1) node {$b^2_1$};
          \draw (1,1) node {$b_2$};
          \draw (3,3) node {$b^2_6$};
          \draw (2,3) node {$b^2_7$};
          \draw (1,3) node {$b^2_8$};
          \draw (3,2) node {$b^2_3$};
          \draw (2,2) node {$b^2_4$};
          \draw (1,2) node {$b^2_5$};

          \draw (-1,0) node {$b_3$};
          \draw ( 0,0) node {$b_4$};
          \draw ( 1,0) node {$b_5$};

          %%    % B3
          \draw (-3,-1) node {$b^3_8$};
          \draw (-2,-1) node {$b^3_7$};
          \draw (-1,-1) node {$b_6$};
          \draw (-3,-2) node {$b^3_5$};
          \draw (-2,-2) node {$b^3_4$};
          \draw (-1,-2) node {$b^3_3$};
          \draw (-3,-3) node {$b^3_2$};
          \draw (-2,-3) node {$b^3_1$};
          \draw (-1,-3) node {$b^3_0$};

          %%    % B4
          \draw (3,-1) node {$b^4_6$};
          \draw (2,-1) node {$b^4_7$};
          \draw (1,-1) node {$b_8$};
          \draw (3,-3) node {$b^4_0$};
          \draw (2,-3) node {$b^4_1$};
          \draw (1,-3) node {$b^4_2$};
          \draw (3,-2) node {$b^4_3$};
          \draw (2,-2) node {$b^4_4$};
          \draw (1,-2) node {$b^4_5$};

          \draw (0,-1) node {$b_7$};
%%%<<< end Diagram1New.tex file

          \draw (-2.2,0) node {$B_0 \rightarrow $};
          \draw (-2, 4) node {$B_1$};
          \draw ( 2, 4) node {$B_2$};
          \draw (-2,-4) node {$B_3$};
          \draw ( 2,-4) node {$B_4$};
        \end{tikzpicture}
      }
      \caption{The universe $A = B_0 \cup \cdots \cup B_4$ of an
       example overalgebra; dotted lines surround each congruence class $C_i$ of
       $\beta$; the tie-points are $t_1, t_2, t_3, t_4 = b_0, b_2, b_6, b_8$;
       if %$C_1, C_2,  C_3$ are the congruence classes of $\beta$ and
       $\sI_r = \{i \mid t_i \in C_r\}$, then
      $\sI_1 = \{1, 2\}$, $\sI_2 = \emptyset$, and $\sI_3 = \{3, 4\}$.
}
      \label{fig:overalgebra}
\end{figure}

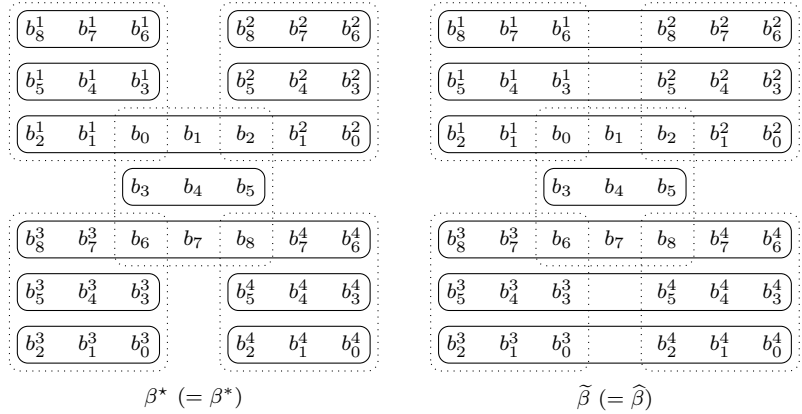
\begin{figure}[h]
  \centering
      {\scalefont{.8}
        \begin{tikzpicture}[scale=.7]
          % Left
          \draw[rounded corners,dotted] (-1.5,-1.5) rectangle (1.5,1.5);
          \draw[rounded corners,dotted] (.5,.5) rectangle (3.5,3.5);
          \draw[rounded corners,dotted] (.5,-3.5) rectangle (3.5,-.5);
          \draw[rounded corners,dotted] (-3.5,-3.5) rectangle (-.5,-.5);
          \draw[rounded corners,dotted] (-3.5,.5) rectangle (-.5,3.5);

          % Right
          \draw[rounded corners, dotted] (6.5,-1.5) rectangle (9.5,1.5);
          \draw[rounded corners, dotted] (8.5,.5) rectangle (11.5,3.5);
          \draw[rounded corners, dotted] (8.5,-3.5) rectangle (11.5,-.5);
          \draw[rounded corners, dotted] (4.5,-3.5) rectangle (7.5,-.5);
          \draw[rounded corners, dotted] (4.5,.5) rectangle (7.5,3.5);

          % Left
          \draw (0, -4) node {$\beta^\star$ $(=\beta^*)$};
          \draw[rounded corners] (-1.35,-.35) rectangle (1.35,.35);
          \draw[rounded corners] (-3.35,.65) rectangle (3.35,1.35);
          \draw[rounded corners] (-3.35,-.65) rectangle (3.35,-1.35);
          \draw[rounded corners] (-3.35,1.65) rectangle (-.65,2.35);
          \draw[rounded corners] (-3.35,2.65) rectangle (-.65,3.35);
          \draw[rounded corners] (-3.35,-1.65) rectangle (-.65,-2.35);
          \draw[rounded corners] (-3.35,-2.65) rectangle (-.65,-3.35);
          \draw[rounded corners] (4-3.35,1.65) rectangle (4-.65,2.35);
          \draw[rounded corners] (4-3.35,2.65) rectangle (4-.65,3.35);
          \draw[rounded corners] (4-3.35,-1.65) rectangle (4-.65,-2.35);
          \draw[rounded corners] (4-3.35,-2.65) rectangle (4-.65,-3.35);

          % Right
          \draw (8, -4) node {$\tbeta$ $(=\hbeta)$};
          \draw[rounded corners] (8-1.35,-.35) rectangle (8+1.35,.35); % middle block
          \draw[rounded corners] (8-3.35,.65) rectangle (11.35,1.35); % upper big block
          \draw[rounded corners] (8-3.35,-.65) rectangle (11.35,-1.35);% lower big block
          \draw[rounded corners] (8-3.35,1.65) rectangle (4+8-.65,2.35);
          \draw[rounded corners] (8-3.35,2.65) rectangle (4+8-.65,3.35);
          \draw[rounded corners] (8-3.35,-1.65) rectangle (4+8-.65,-2.35);
          \draw[rounded corners] (8-3.35,-2.65) rectangle (4+8-.65,-3.35);

%%%          \input{Diagram1New.tex}
%%%>>> Begin Diagram1New.tex file:
          %%    % B1
          \draw (-3,1) node {$b^1_2$};
          \draw (-2,1) node {$b^1_1$};
          \draw (-1,1) node {$b_0$};
          \draw (-3,3) node {$b^1_8$};
          \draw (-2,3) node {$b^1_7$};
          \draw (-1,3) node {$b^1_6$};
          \draw (-3,2) node {$b^1_5$};
          \draw (-2,2) node {$b^1_4$};
          \draw (-1,2) node {$b^1_3$};

          \draw ( 0,1) node {$b_1$};

          %%    % B2
          \draw (3,1) node {$b^2_0$};
          \draw (2,1) node {$b^2_1$};
          \draw (1,1) node {$b_2$};
          \draw (3,3) node {$b^2_6$};
          \draw (2,3) node {$b^2_7$};
          \draw (1,3) node {$b^2_8$};
          \draw (3,2) node {$b^2_3$};
          \draw (2,2) node {$b^2_4$};
          \draw (1,2) node {$b^2_5$};

          \draw (-1,0) node {$b_3$};
          \draw ( 0,0) node {$b_4$};
          \draw ( 1,0) node {$b_5$};

          %%    % B3
          \draw (-3,-1) node {$b^3_8$};
          \draw (-2,-1) node {$b^3_7$};
          \draw (-1,-1) node {$b_6$};
          \draw (-3,-2) node {$b^3_5$};
          \draw (-2,-2) node {$b^3_4$};
          \draw (-1,-2) node {$b^3_3$};
          \draw (-3,-3) node {$b^3_2$};
          \draw (-2,-3) node {$b^3_1$};
          \draw (-1,-3) node {$b^3_0$};

          %%    % B4
          \draw (3,-1) node {$b^4_6$};
          \draw (2,-1) node {$b^4_7$};
          \draw (1,-1) node {$b_8$};
          \draw (3,-3) node {$b^4_0$};
          \draw (2,-3) node {$b^4_1$};
          \draw (1,-3) node {$b^4_2$};
          \draw (3,-2) node {$b^4_3$};
          \draw (2,-2) node {$b^4_4$};
          \draw (1,-2) node {$b^4_5$};

          \draw (0,-1) node {$b_7$};

%%%<<< end Diagram1New.tex file

%%%          \input{Diagram2New.tex}
%%%>>> Begin Diagram2New.tex file:
          %%    % B1
          \draw (8-3,1) node {$b^1_2$};
          \draw (8-2,1) node {$b^1_1$};
          \draw (8-1,1) node {$b_0$};
          \draw (8-3,3) node {$b^1_8$};
          \draw (8-2,3) node {$b^1_7$};
          \draw (8-1,3) node {$b^1_6$};
          \draw (8-3,2) node {$b^1_5$};
          \draw (8-2,2) node {$b^1_4$};
          \draw (8-1,2) node {$b^1_3$};

          \draw (8,1) node {$b_1$};

          %%    % B2
          \draw (8+3,1) node {$b^2_0$};
          \draw (8+2,1) node {$b^2_1$};
          \draw (8+1,1) node {$b_2$};
          \draw (8+3,3) node {$b^2_6$};
          \draw (8+2,3) node {$b^2_7$};
          \draw (8+1,3) node {$b^2_8$};
          \draw (8+3,2) node {$b^2_3$};
          \draw (8+2,2) node {$b^2_4$};
          \draw (8+1,2) node {$b^2_5$};

          \draw (8-1,0) node {$b_3$};
          \draw (8+ 0,0) node {$b_4$};
          \draw (8+ 1,0) node {$b_5$};

          %%    % B3
          \draw (8-3,-1) node {$b^3_8$};
          \draw (8-2,-1) node {$b^3_7$};
          \draw (8-1,-1) node {$b_6$};
          \draw (8-3,-2) node {$b^3_5$};
          \draw (8-2,-2) node {$b^3_4$};
          \draw (8-1,-2) node {$b^3_3$};
          \draw (8-3,-3) node {$b^3_2$};
          \draw (8-2,-3) node {$b^3_1$};
          \draw (8-1,-3) node {$b^3_0$};

          %%    % B4
          \draw (8+3,-1) node {$b^4_6$};
          \draw (8+2,-1) node {$b^4_7$};
          \draw (8+1,-1) node {$b_8$};
          \draw (8+3,-2) node {$b^4_3$};
          \draw (8+2,-2) node {$b^4_4$};
          \draw (8+1,-2) node {$b^4_5$};
          \draw (8+3,-3) node {$b^4_0$};
          \draw (8+2,-3) node {$b^4_1$};
          \draw (8+1,-3) node {$b^4_2$};

          \draw (8+0,-1) node {$b_7$};

%%%<<< end Diagram2New.tex file
        \end{tikzpicture}
      }
      \caption{Solid lines delineate the congruence classes of $\beta^\star$ (left) and
        $\tbeta$ (right); dotted lines surround the sets $B_i$.}
      \label{fig:overalgebra1}
\end{figure}

\begin{proof}[Proof of Theorem~\ref{thm1}]
%bd  \begin{enumerate}[(i)]
%bd  \item
(i) 
We first check that $\beta^\star\in \Con\bA$.
 It is
easy to see that $\beta^\star$ is an equivalence relation on $A$.  To see that it
is a congruence relation of $\bA$, we prove $f(\beta^\star) \subseteq \beta^\star$ for
all $f\in F_A$, where
\[
F_A \defeq  \{g e_0 \suchthat g\in F_B\} \cup \{e_k \suchthat 0\leq k \leq K\} \cup \{s_n \suchthat 1 \leq n
\leq N\}.
\]
That is, we prove: if
$(x,y)\in \beta^\star$ and $f\in F_A$, then $(f(x), f(y))\in \beta^\star$.\\[6pt]
Fix $0\leq k \leq K$ and consider the action of the map $e_k$ on the congruence
classes of $\beta^\star$ described in~(\ref{eq:betastartclasses}).  For all $1\leq r \leq
m$, we have $e_k(C_r^j) = C_r^k$, for all $0\leq j\leq K$, and so
$e_k(C_r \cup \bigcup_{i\in \sI_r} C_r^i) = C_r^k$.  It follows that for all $0\leq
k\leq K$ the map $e_k$ takes blocks of $\beta^\star$ into blocks of
$\beta^\star$, thus, $e_k(\beta^\star) \subseteq \beta^\star$.  In
particular, $e_0$ takes blocks of $\beta^\star$ into blocks of $\beta$, so
$g e_0(\beta^\star) \subseteq \beta^\star$ for all $g\in F_B$.  Next note that
for $1\leq n\leq N$, $0\leq r \leq K$, $0\leq j\leq K$ the
map $s_n$ acts as the identity on $C_r^j$ when $j\notin \sT_n$, otherwise it maps
$C_r^j$  to the point $t_j$. It follows that $s_n
(C_r \cup \bigcup_{i\in \sI_r}C_r^i)\subseteq C_r \cup \bigcup_{i\in
  \sI_r}C_r^i$, since the union is over $\sI_r = \{i \mid t_i \in C_r\}$.
Therefore, $s_n(\beta^\star) \subseteq \beta^\star$ for all $1\leq n \leq N$,
and we have proved
$\beta^\star\in \Con\bA$.

Since $\beta = \beta_0 \subseteq \beta^\star$, we have
$\beta^\star \resB = \beta$.
Therefore, $\beta^\star \geq \beta^*$, by the
residuation lemma of Section~\ref{sec:residuation-lemma}.
To complete the proof of (i), we show that
$\beta \subseteq \eta \in \Con \bA$ implies
$\beta^\star\leq \eta$.
If $\beta \subseteq \eta\in \Con \bA$, then $\bigcup \beta^{k} \subseteq
\eta$, since
for each $0\leq k\leq K$ and for each pair
$(u,v)\in \beta^{k}$ there exists a pair $(x,y)\in \beta$ with
$(e_k(x), e_k(y)) = (u,v)$, and $(x,y)\in \beta\subseteq \eta$ implies
$(u,v) = (e_k(x), e_k(y)) \in \eta$.
The second term of~(\ref{eq:star}) belongs to
$\eta$ by transitivity.  Indeed, suppose $(x,y)$ is an arbitrary element of that
term, with, say, $(x, t_i) \in \beta^{i}$, $(y, t_j)\in \beta^{j}$, and $(t_i, t_j) \in
\beta$.  As we just observed, 
%bd $\beta,\, \beta^{i}$, 
$\beta$, $\beta^{i}$,
and $\beta^{j}$ are subsets of $\eta$,
so 
%bd $x \; \beta^{i}\; t_i \; \beta \; t_j \;\beta^{j} \; y$ 
$x \mathrel{\beta^{i}} t_i \mathrel{\beta} t_j \mathrel{\beta^{j}} y$ implies
$(x,y)\in \eta$.

\smallskip %bd added
%\vskip2mm
%bd\item
(ii) 
Clearly $\tbeta$ is an equivalence on $A$.  To see that it
is a congruence relation of $\bA$, we prove $f(\tbeta) \subseteq \tbeta$ for
all $f\in F_A$.
Fix $(x,y)\in \tbeta$.  If
$(x,y)\in \beta^\star$, then $(f(x), f(y)) \in \beta^\star$ holds for all $f\in F_A$, as
in part (i).
If $(x,y)\notin \beta^\star$, then
$x \in C^j_{\ell}$ and
$y\in C^k_{\ell}$
for some $j, k \in \sT_n \cap \sI_r$,  with
$1\leq n \leq N$,
$1\leq r \leq m$, and $\ell\neq r$.
In this case, $x$ and $y$ are in the $\ell$-th blocks of $B_j$ and $B_k$, respectively, so
for each $0\leq i \leq K$
both $e_i(x)$ and $e_i(y)$ belong to $C_\ell^i$, so
$(e_i(x), e_i(y)) \in \beta^{i}$.
In particular, $(e_0(x), e_0(y)) \in \beta$, so
$(g e_0(x), g e_0(y)) \in \beta$ for all $g\in F_B$.
Also, $(s_n(x),s_n(y))=(t_j,t_k) \in C_r\times C_r \subseteq \beta$, while
$(s_{n'}(x),s_{n'}(y))=(x,y)\in \tbeta$, when $n'\neq n$.
This proves that $(f(x), f(y)) \in \tbeta$ for all $f\in F_A$.
Whence $\tbeta \in \Con\bA$.
(Note that we have proved: $(x,y)\in \tbeta$ implies $(f(x), f(y))\in \beta^\star$ for all $f\in F_A$,
except when $f=s_n$ acts as the identity on both $x$ and $y$.
This will be useful in the proof of part (\ref{item-iii}) below.)

Notice that $\tbeta\resB = \beta$.  Therefore, by the residuation lemma
of Section~\ref{sec:residuation-lemma}, we have $\tbeta \leq \widehat{\beta}$.
To prove $\tbeta=\widehat{\beta}$, we suppose $(x,y)\notin \tbeta$ and show
$(x,y) \notin \widehat{\beta}$.
Recall that the map $\hatmap\colon  \Con\bB \rightarrow \Con\bA$ is given by
\[
%bd \widehat{\beta} = \{(x,y) \in A^2 \suchthat \text{ for all }\,
%f\in \Pol_1(\bA), \; (e_0f(x), e_0f(y))\in \beta \}.
\widehat{\beta} = \{(x,y) \in A^2 \suchthat \text{ for all }
f\in \Pol_1(\bA), \, (e_0f(x), e_0f(y))\in \beta \}.
\]
Let
$x \in C_p^j$ and
$y\in C^k_q$, for some $1\leq p, q
\leq m$ and $1\leq j, k \leq K$.  If $p\neq q$, then
$e_0(x) \in C_p$ and $e_0(y)\in C_q$---distinct $\beta$ classes---so
$(x,y) \notin \widehat{\beta}$.
Suppose $p=q$.  If both $j$ and $k$ belong to $\sT_n\cap \sI_r$ for some
$1\leq n \leq N$, $1\leq r\leq m$, then $(x,y)\in \tbeta$, contradicting our
assumption.
If $(j,k) \in \sT_n^2$ for some $n$, then for all $r$ we have $(j,k) \notin \sI_r^2$, so
$(e_0s_n(x), e_0s_n(y)) = (t_j, t_k) \notin \beta$ and
 $(x,y) \notin \widehat{\beta}$.
If $(j,k) \in \sI_r^2$ for some $r$, then for all $n$ we have $(j,k) \notin
\sT_n^2$.  Without loss of generality, assume $j \in \sT_n$. Then
$(e_0s_n(x), e_0s_n(y)) = (t_j, e_0(y))$, which does not belong to $\beta$.
For, $t_j\in C_r$ while $e_0(y) \in C_q$, and $q\neq r$ (otherwise $(x,y)\in \beta^\star \leq \tbeta$).
Thus, $(x,y) \notin \widehat{\beta}$.  Finally, suppose that for all $n$
and $r$ we have $(j,k) \notin \sT_n^2$ and $(j,k) \notin \sI_r^2$.
Let $j\in \sT_n$ and $k\in \sT_{n'}$. Then
$(e_0 s_n s_{n'}(x), e_0s_n s_{n'} (y)) = (t_j, t_k)\notin \beta$,
so $(x,y) \notin \widehat{\beta}$.

\smallskip %bd added
%\vskip2mm
%bd\item
(iii)
Note that every equivalence relation $\theta$ on $A$ with
$\beta^\star \subseteq \theta \subseteq \tbeta$ satisfies
$f(\theta)\subseteq \theta$ for all $f\in F_A$, and is therefore a congruence
relation of $\bA$. Indeed, in proving that $\tbeta$ is a congruence of $\bA$,
we saw that $f(\tbeta)\subseteq \beta^\star$ for all $f\in F_A$, except when $f
= s_n$, in which case $f$ acts as the identity on some blocks of $\tbeta$ and
maps other blocks of $\tbeta$ to tie-points.  It follows that
$f(\theta)\subseteq \theta$ for all equivalence relations
$\beta^\star \subseteq \theta \subseteq \tbeta$.
Therefore, the interval $[\beta^\star, \tbeta]$ in $\Con\bA$ is
$\{\theta \in \Eq(A) \suchthat \beta^\star \subseteq \theta \subseteq \tbeta \}$.
To complete the proof, we must show that this interval is isomorphic to the lattice
$\prod_{r=1}^m \prod_{n=1}^N (\Eq |\sT_n \cap \sI_r|)^{m-1}$.
This follows from a standard fact about intervals $[\zeta, \eta]$ for $\zeta\leq
\eta$ in the lattice of equivalence relations on a set $S$. (See, e.g.,
Birkhoff~\cite{Birkhoff:1995}, Exercise~10b, page~98.)
Specifically, if the equivalence classes of $\eta$ are $S_1, \dots, S_N$ and if
each $S_j$ $(1\leq j \leq N)$ is the union of $n_j$ equivalence classes of
$\zeta$, then  %the interval $[\zeta, \eta]$ in $\Eq(S)$ satisfies
\begin{equation}
  \label{eq:basicinterval}
[\zeta, \eta] \cong \prod_{j=1}^N \Eq(n_j).
\end{equation}
Now, if a block of $\tbeta$ contains $C_r$ for some $1\leq r \leq m$, then it
consists of a single block of $\beta^\star$; otherwise, it consists of
$q = |\sT_n\cap \sI_r|$ blocks of $\beta^\star$, say,
%bd$C^{i_1}_{\ell}, \,
% C^{i_2}_{\ell}, \dots, \,  C^{i_q}_{\ell}$,
$C^{i_1}_{\ell}$, $C^{i_2}_{\ell}$, \dots, $C^{i_q}_{\ell}$,
 for some $1\leq \ell \leq m$, $\ell \neq r$.
Indeed, we have
\begin{itemize}
\item[-] for each $1\leq r \leq m$,
\begin{itemize}
\item[-]  one block of $\tbeta$ consisting of a single block of $\beta^\star$, and
\item[-] for each $1\leq n \leq N$,
\begin{itemize}
\item[] $m-1$ blocks of $\tbeta$ consisting of
 $|\sT_n\cap \sI_r|$ blocks of $\beta^\star$.  %(See Figure~\ref{fig:overalgebra1}.)
\end{itemize}
\end{itemize}
\end{itemize}
 Therefore, by~(\ref{eq:basicinterval}), we arrive at
 \[
[\beta^\star,\tbeta] \cong
\prod_{r=1}^m \prod_{n=1}^N (\Eq |\sT_n \cap \sI_r|)^{m-1}.\qedhere %bd added
 \]
%bd\end{enumerate}
%bd blank line removed at end of proof
\end{proof}

We now describe a situation in which the foregoing construction is particularly
useful and easy to apply.
Given an algebra $\bB$ and a pair $(x,y) \in B^2$, the unique smallest congruence
relation of $\bB$ containing $(x,y)$ is called the \emph{principal
  congruence generated by} $(x,y)$, denoted by $\Cg^\bB(x,y)$.
Given a finite congruence lattice  $\Con\bB$, let $\beta = \Cg^\bB(x,y)$, and
consider the overalgebra $\bA$ constructed from  base algebra $\bB$ and
tie-points $T : x,y$.  Then by Theorem~\ref{thm1} the interval consisting of all
$\theta \in \Con\bA$ for which $\theta\resB = \beta$ is given by
$[\beta^*,\widehat{\beta}] \cong \Eq(2)^{m-1} \cong \two^{m-1}$, where $m$ is
the number of congruence classes of $\beta$. Also, since $\beta$ is the unique
smallest congruence containing $(x,y)$, we have  $(x,y)\notin \theta$ for all
$\theta \ngeq \beta$.
Since $x, y$ are the only tie-points, Theorem~\ref{thm1}
implies that for all $\theta \ngeq \beta$ the interval
$[\theta^*,\widehat{\theta}]$ is trivial (since $(x,y)\notin \theta$ implies no
block of $\theta$ contains more than one tie-point). Thus,
$\theta^*=\widehat{\theta}$. It is also immediate from Theorem~\ref{thm1} that
if $\theta \geq \beta$ and if $\theta$ has $r$ congruence
classes, then $[\theta^*,\widehat{\theta}] \cong \two^{r-1}$.

\begin{example}
\label{ex:3.2}
  Theorem~\ref{thm1} explains the shapes of the two congruence
  lattices that we observed in Example~\ref{ex:3.1}.
  Returning to that example, with base algebra $\bB$
  equal to the right regular $S_3$-set, we now show some other congruence lattices that
  result by simply changing the sequence of tie-points.
  Recall that the partitions of $B$ corresponding to nontrivial congruence
  relations of $\bB$ are $\alpha = | 0, 1, 2 | 3, 4, 5|$,
  $\beta = | 0, 3 | 1, 4 | 2, 5 |$,
  $\gamma = | 0, 4 | 1, 5| 2, 3 |$, and
  $\delta = | 0, 5| 1, 3| 2, 4 |$.

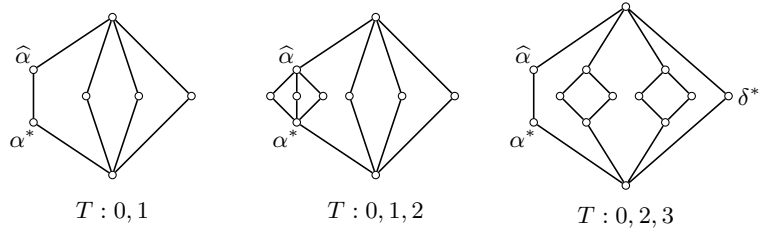
\begin{figure}[h!]
  \centering
    \begin{tikzpicture}[scale=.7]
      % T = \{0, 1\}
      \node (150) at (1.5,0)  [draw, circle, inner sep=\dotsize] {};
      \node (01) at (0,1)  [draw, circle, inner sep=\dotsize] {};
      \node (02) at (0,2)  [draw, circle, inner sep=\dotsize] {};
      \node (115) at (1,1.5)  [draw, circle, inner sep=\dotsize] {};
      \node (215) at (2,1.5)  [draw, circle, inner sep=\dotsize] {};
      \node (315) at (3,1.5)  [draw, circle, inner sep=\dotsize] {};
      \node (153) at (1.5,3)  [draw, circle, inner sep=\dotsize] {};
      \draw[semithick]
      (150) to (01) to (02) to (153) to (115) to (150) to (215) to (153) to (315) to (150);
      \draw[font=\small] (1.5,-.7) node {$T : 0,1$};
      \draw[font=\small] (-.2,.7) node {$\alpha^*$};
      \draw[font=\small] (-.2,2.3) node {$\widehat{\alpha}$};

      % T = \{0, 1, 2\}
      \node (650) at (6.5,0)  [draw, circle, inner sep=\dotsize] {};
      \node (51) at (5,1)  [draw, circle, inner sep=\dotsize] {};
      \node (52) at (5,2)  [draw, circle, inner sep=\dotsize] {};
      \node (4515) at (4.5,1.5)  [draw, circle, inner sep=\dotsize] {};
      \node (515) at (5,1.5)  [draw, circle, inner sep=\dotsize] {};
      \node (5515) at (5.5,1.5)  [draw, circle, inner sep=\dotsize] {};
      \node (615) at (6,1.5)  [draw, circle, inner sep=\dotsize] {};
      \node (715) at (7,1.5)  [draw, circle, inner sep=\dotsize] {};
      \node (815) at (8,1.5)  [draw, circle, inner sep=\dotsize] {};
      \node (653) at (6.5,3)  [draw, circle, inner sep=\dotsize] {};
      \draw[semithick]
      (650) to (51) to (515) to (52) to (653) to (615) to (650) to (715) to (653) to
      (815) to (650)
      (51) to (4515) to (52) to (5515) to (51);
      \draw[font=\small] (6.5,-.7) node {$T : 0,1,2$};
      \draw[font=\small] (4.8,.7) node {$\alpha^*$};
      \draw[font=\small] (4.8,2.3) node {$\widehat{\alpha}$};

      % T = \{0, 2, 3\}
      \node (bot) at (11.25,-.2)  [draw, circle, inner sep=\dotsize] {};
      \node (top) at (11.25,3.2)  [draw, circle, inner sep=\dotsize] {};
      \node (a) at (9.5,1)  [draw, circle, inner sep=\dotsize] {};
      \node (A) at (9.5,2)  [draw, circle, inner sep=\dotsize] {};
      \draw[font=\small] (9.3,.7) node {$\alpha^*$};
      \draw[font=\small] (9.3,2.3) node {$\widehat{\alpha}$};

      \node (b) at (10.5,1)  [draw, circle, inner sep=\dotsize] {};
      \node (b1) at (10,1.5)  [draw, circle, inner sep=\dotsize] {};
      \node (b2) at (11,1.5)  [draw, circle, inner sep=\dotsize] {};
      \node (B) at (10.5,2)  [draw, circle, inner sep=\dotsize] {};

      \node (c) at (12,1)  [draw, circle, inner sep=\dotsize] {};
      \node (c1) at (11.5,1.5)  [draw, circle, inner sep=\dotsize] {};
      \node (c2) at (12.5,1.5)  [draw, circle, inner sep=\dotsize] {};
      \node (C) at (12,2)  [draw, circle, inner sep=\dotsize] {};

      \node (d) at (13.2,1.5)  [draw, circle, inner sep=\dotsize] {};
      \draw[font=\small] (13.6,1.5) node {$\delta^*$};
      \draw[semithick]
      (bot) to (a) to (A) to (top) to (B) to (b1) to (b) to (b2) to (B)
      (b) to (bot) to (c) to (c1) to (C) to (c2) to (c)
      (C) to (top) to (d) to (bot);
      \draw[font=\small] (11.25,-.8) node {$T : 0, 2, 3$};

    \end{tikzpicture}
  \caption{Congruence lattices of overalgebras of the $S_3$-set for various
    tie-point sequences.}
  \label{fig:ConOverAlgebras}
\end{figure}

\begin{figure}[h!]
  \centering
    \begin{tikzpicture}[scale=.7]
      % T = \{0, 1, 2, 3\}
      \node (bot) at (3.25,0.5)  [draw, circle, inner sep=\dotsize] {};
      \node (top) at (3.25,4.5)  [draw, circle, inner sep=\dotsize] {};

      \node (a) at (1,2)  [draw, circle, inner sep=\dotsize] {};
      \node (a1) at (.5,2.5)  [draw, circle, inner sep=\dotsize] {};
      \node (a2) at (1,2.5)  [draw, circle, inner sep=\dotsize] {};
      \node (a3) at (1.5,2.5)  [draw, circle, inner sep=\dotsize] {};
      \node (A) at (1,3)  [draw, circle, inner sep=\dotsize] {};
      \draw[font=\small] (.75,1.7) node {$\alpha^*$};
      \draw[font=\small] (.75,3.3) node {$\widehat{\alpha}$};

      \node (b) at (2.5,2)  [draw, circle, inner sep=\dotsize] {};
      \node (b1) at (2,2.5)  [draw, circle, inner sep=\dotsize] {};
      \node (b2) at (3,2.5)  [draw, circle, inner sep=\dotsize] {};
      \node (B) at (2.5,3)  [draw, circle, inner sep=\dotsize] {};

      \node (c) at (4,2)  [draw, circle, inner sep=\dotsize] {};
      \node (c1) at (3.5,2.5)  [draw, circle, inner sep=\dotsize] {};
      \node (c2) at (4.5,2.5)  [draw, circle, inner sep=\dotsize] {};
      \node (C) at (4,3)  [draw, circle, inner sep=\dotsize] {};

      \node (d) at (5.5,2)  [draw, circle, inner sep=\dotsize] {};
      \node (d1) at (5,2.5)  [draw, circle, inner sep=\dotsize] {};
      \node (d2) at (6,2.5)  [draw, circle, inner sep=\dotsize] {};
      \node (D) at (5.5,3)  [draw, circle, inner sep=\dotsize] {};

      \draw[semithick]
      (bot) to (a) to (a1) to (A) to (a2) to (a) to (a3) to (A) to (top) to
      (B) to (b1) to (b) to (b2) to (B)
      (b) to (bot) to (c) to (c1) to (C) to (c2) to (c)
      (C) to (top) to (D) to (d1) to (d) to (d2) to (D)
      (d) to (bot);
      \draw[font=\small] (3.25,-.2) node {$T : 0,1,2,3$};

      % T : 0, 2, 3, 5
      \node (Rbot) at (11.25,0.5)  [draw, circle, inner sep=\dotsize] {};
      \node (Rtop) at (11.25,4.5)  [draw, circle, inner sep=\dotsize] {};

      \node (Ra) at (9,2)  [draw, circle, inner sep=\dotsize] {};
      \node (Ra1) at (8.5,2.5)  [draw, circle, inner sep=\dotsize] {};
      \node (Ra2) at (9.5,2.5)  [draw, circle, inner sep=\dotsize] {};
      \node (RA) at (9,3)  [draw, circle, inner sep=\dotsize] {};

      \node (Rb) at (10.5,1.8)  [draw, circle, inner sep=\dotsize] {};
      \node (RB) at (10.5,3.2)  [draw, circle, inner sep=\dotsize] {};
      \draw[font=\small] (10.2,1.5) node {$\beta^*$};
      \draw[font=\small] (10.2,3.4) node {$\widehat{\beta}$};

      \node (Rc) at (12,2)  [draw, circle, inner sep=\dotsize] {};
      \node (Rc1) at (11.5,2.5)  [draw, circle, inner sep=\dotsize] {};
      \node (Rc2) at (12.5,2.5)  [draw, circle, inner sep=\dotsize] {};
      \node (RC) at (12,3)  [draw, circle, inner sep=\dotsize] {};

      \node (Rd) at (13.5,2)  [draw, circle, inner sep=\dotsize] {};
      \node (Rd1) at (13,2.5)  [draw, circle, inner sep=\dotsize] {};
      \node (Rd2) at (14,2.5)  [draw, circle, inner sep=\dotsize] {};
      \node (RD) at (13.5,3)  [draw, circle, inner sep=\dotsize] {};

      \draw[semithick]
      (Rbot) to (Ra) to (Ra1) to (RA) to (Ra2) to (Ra)
      (RA) to (Rtop) to (RB)
      (Rb) to (Rbot) to (Rc) to (Rc1) to (RC) to (Rc2) to (Rc)
      (RC) to (Rtop) to (RD) to (Rd1) to (Rd) to (Rd2) to (RD)
      (Rd) to (Rbot);
      \draw [semithick]
      (Rb) to [out=140,in=-140] (RB)
      (RB) to [out=-40,in=40] (Rb);
      \draw[font=\small] (10.5,2.5) node {$L$};

      \draw[font=\small] (11.25,-.2) node {$T: 0,2,3, 5$};

    \end{tikzpicture}
  \caption{Congruence lattices of overalgebras of the $S_3$-set for various
    tie-point sequences; here, $L\cong \two^2\times\two^2$.}
  \label{fig:ConOverAlgebras2}
\end{figure}
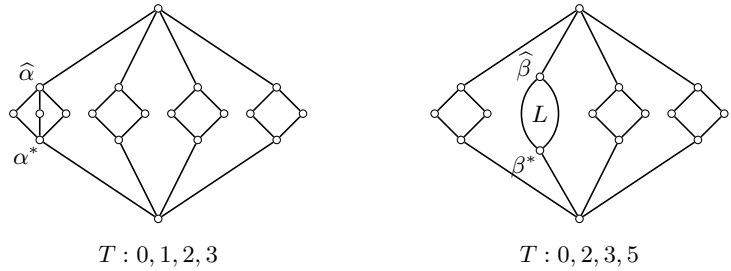

   It is clear from Theorem~\ref{thm1}
 that choosing the sequence $T$ to be $0,1$, or $0,1,2$, or $0, 2, 3$, and taking
 the trivial partition of the tie-point indices ($N=1$),
  yields the congruence lattices appearing in Figure~\ref{fig:ConOverAlgebras}.
When $0, 2, 3, 5$ is chosen
(Figure~\ref{fig:ConOverAlgebras2}, right) we have
$[\beta^*,\widehat{\beta}] \cong \two^2\times\two^2$, which we represent abstractly by $L$
instead of drawing all 16 points in this interval.

\end{example}

Consider again the situation depicted in the congruence lattice on the right of
Figure~\ref{fig:ConOverAlgebras2}, where
$[\beta^*,\widehat{\beta}] = L \cong \two^2\times\two^2$, and suppose we prefer
that all the other $\resB$-inverse images be trivial; that is,
%bd presented 
%$
%[\beta^*,\widehat{\beta}]\cong \two^2\times\two^2, \;
%\alpha^*=\widehat{\alpha}, \;
%\gamma^*=\widehat{\gamma},\;
%\delta^*=\widehat{\delta}.
%$
\[
[\beta^*,\widehat{\beta}]\cong \two^2\times\two^2, \ %bd\;
\alpha^*=\widehat{\alpha}, \ %bd \;
\gamma^*=\widehat{\gamma}, \ %bd\;
\delta^*=\widehat{\delta}.
\]
In other words, we seek a finite algebra with a congruence lattice isomorphic to
the lattice in Figure~\ref{fig:ConOverAlgebras3}.
This is easy to achieve by selecting an appropriate partition of the indices
of the tie-points.  Let $t_1, t_2, t_3, t_4 = 0, 2, 3, 5$, and let
the partition of the tie-point indices be $|\sT_1|\sT_2| = |1, 3|2, 4|$.
Then $\beta = |C_1|C_2|C_3| = |0,3|1,4|2,5|$ is the only nontrivial congruence of
$\bB$ having blocks containing multiple tie-points with indices in a single
block of the partition $|\sT_1|\sT_2|$.  Specifically,
$(t_1, t_3) \in \beta$ and $\sT_1\cap \sI_1 = \{1,3\}$ (recall, $\sI_1 = \{i \mid
t_i \in C_1\}$), and  $(t_2, t_4) \in \beta$ and $\sT_2\cap \sI_2 = \{2,4\}$.
Since the number of congruence classes of $\beta$ is $m = 3$, we have
\begin{align*}
[\beta^*, \hbeta] &\cong
\prod_{r=1}^m \prod_{n=1}^N (\Eq |\sT_n \cap \sI_r|)^{m-1}\\
&=
(\Eq|\sT_1\cap \sI_1|)^{m-1} \times
(\Eq|\sT_2\cap \sI_2|)^{m-1} \\
&=
(\Eq %bd\, 
2)^2 \times (\Eq%bd \,
2)^2.
\end{align*}

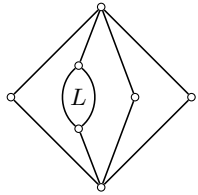
\begin{figure}[h!]
  \centering
    \begin{tikzpicture}[scale=.6]
      \node (Rbot) at (11.25,0.5)  [draw, circle, inner sep=\dotsize] {};
      \node (Rtop) at (11.25,4.5)  [draw, circle, inner sep=\dotsize] {};
      \node (Ra) at (9.25,2.5)  [draw, circle, inner sep=\dotsize] {};
      \node (Rb) at (10.75,1.8)  [draw, circle, inner sep=\dotsize] {};
      \node (RB) at (10.75,3.2)  [draw, circle, inner sep=\dotsize] {};
      \node (Rc) at (12,2.5)  [draw, circle, inner sep=\dotsize] {};
      \node (Rd) at (13.25,2.5)  [draw, circle, inner sep=\dotsize] {};
      \draw[semithick]
      (Rbot) to (Ra) to (Rtop) to (RB)
      (Rb) to (Rbot) to (Rc) to (Rtop) to (Rd) to (Rbot);
      \draw [semithick]
      (Rb) to [out=140,in=-140] (RB)
      (RB) to [out=-40,in=40] (Rb);
      \draw[font=\small] (10.75,2.5) node {$L$};
    \end{tikzpicture}
  \caption{A lattice obtained as a congruence lattice of an overalgebra by
    specifying a suitable partition of the indices of the sequence of tie-points.}
  \label{fig:ConOverAlgebras3}
\end{figure}

A second version of our \GAP\ function used to construct
overalgebras allows the user to specify an arbitrary partition of the
tie-points, and the associated operations $\{s_n \mid 1\leq n \leq N\}$ will be
defined accordingly, as  in (\ref{fn:s_n}).  Continuing with our running example with
tie-points $t_1, t_2, t_3, t_4 = 0,2,3,5$, we now introduce the partition $|1, 3|2,
4|$ of the tie-point indices by invoking the following commands:

\begin{small}%bd ok
\begin{verbatim}
gap> g:=SymmetricGroup(3);;
gap> B:=[(),(1,2,3),(1,3,2),(1,2),(1,3),(2,3)];;
gap> G:=Action(g,B,OnRight);;
gap> OveralgebraXO([ G, [[0,3], [2,5]] ]);
\end{verbatim}
\end{small}

\noindent %bd ok
The resulting overalgebra has a congruence lattice isomorphic to the lattice in
Figure~\ref{fig:ConOverAlgebras3}, with
$L \cong \two^2\times\two^2$.  Similarly,

\begin{small}%bd ok
\begin{verbatim}
gap> OveralgebraXO([ G, [[0,1,2], [3,4,5]] ]);
\end{verbatim}
\end{small}

\noindent %bd ok
produces an overalgebra with congruence lattice isomorphic to the lattice in
Figure~\ref{fig:ConOverAlgebras3}, but with
$L =[\alpha^*, \widehat{\alpha}]\cong \Eq(3) \times\Eq(3)$.  This time $\alpha$ is the only congruence
with nontrivial $\resB$-inverse image.

We conclude this section by noting that, as mentioned at the outset, terms in
the sequence of tie-points may be repeated, and this gives us control over the
number of terms that appear in the product in~(\ref{eq:item-iii}).  For example,
consider the sequence of tie-points $t_1, t_2, \dots, t_9 = 0,1,2,0,1,2,3,4,5$ and
the partition $|1,2,3|4,5,6|7,8,9|$ of the tie-point indices.  The command

\begin{small}
\begin{verbatim}
gap> OveralgebraXO([ G, [[0,1,2], [0,1,2], [3,4,5]] ]);
\end{verbatim}
\end{small}

\noindent %bd ok
produces an overalgebra with a 130 element congruence lattice
like the one in Figure~\ref{fig:ConOverAlgebras3}, with
$L =[\alpha^*, \widehat{\alpha}]\cong \Eq(3)\times \Eq(3)\times \Eq(3)$.  Similarly,

\begin{small}
\begin{verbatim}
gap> OveralgebraXO([ G, [[0,3], [0,3], [0,3], [0,3]] ]);
\end{verbatim}
\end{small}

\noindent %bd ok
 gives a 261 element congruence lattice
with $ L=[\beta^*, \widehat{\beta}] \cong \two^{16}$. %\times\two^2\times\two^2\times\two^2$.

Indeed, there is no bound on the number of terms that can be inserted in the
$\resB$-inverse images in $\Con \bA$.  However, as Theorem~\ref{thm1}
makes clear, the shape of each term is invariably a power of a partition
lattice.  It seems the only way to alter this outcome would be
to add more operations to the overalgebra.  Determining how to further expand
an overalgebra in order to achieve such specific goals is one aspect of the
theory that could benefit from further research.

%%%%%%%%%%%
\subsection{Overalgebras II}
\label{sec:overalgebras-ii}
Consider a finite algebra $\bB$ with $\beta\in \Con \bB$ and suppose $\beta$ is
not a principal congruence.
If the method described in Section~\ref{sec:overalgebras-i} is used to construct
an overalgebra $\bA$ in such a way that $[\beta^*, \hbeta]$ is a nontrivial
interval in $\Con\bA$, then there will invariably
be a principal congruence $\theta < \beta$ that also has nontrivial $\resB$-inverse
image  $[\theta^*, \htheta]$.
(This follows from Theorem~\ref{thm1}.)
It is natural to ask whether there
is an alternative overalgebra construction that might result in a
nontrivial interval $[\beta^*, \hbeta]$ such that $\theta^* =  \htheta$ for
all $\theta \ngeq \beta$.
Bill Lampe proposed an ingenious construction to answer this question.
In this section we present a generalization of Lampe's construction
and prove two theorems which describe the resulting congruence lattices.

Let $\bB = \<B, F_B\>$ be a finite algebra, and suppose
\[
\beta = \Cg^{\bB}((a_1, b_1), \dots, (a_{K-1},b_{K-1}))
\]
for some $a_1, \dots, a_{K-1}, b_1, \dots, b_{K-1} \in B$.
Fix an arbitrary integer $u\geq 1$, let $B_1$, $B_2$, \dots, $B_{uK}$ be sets of
cardinality $|B|$, and fix a set of bijections $\pi_i\colon  B\rightarrow B_i$.
As in the Overalgebras I construction, we use the label $x^i$ to denote $\pi_i(x)$, the element of $B_i$
corresponding to $x\in B$ under the bijection $\pi_i$, and we use
$B_0$ to denote $B$.
Arrange the sets so that they intersect as follows (see Figure~\ref{fig:OveralgebrasIII}):
\begin{align*}
B_0\cap B_1 &=\{a_1\}=\{a_1^{1}\},\\
B_1\cap B_2 &=\{b^1_1\}=\{a_2^{2}\},\\
B_2\cap B_3 &=\{b^2_2\}=\{a_3^{3}\}, \dots
\end{align*}
\begin{align*}
\dots, B_{K-2}\cap B_{K-1} &= \{b_{K-2}^{K-2}\}=\{a_{K-1}^{K-1}\},\\
B_{K-1}\cap B_K = B_K\cap B_{K+1}&=\{b^{K-1}_{K-1}\}=\{a^{K}_1\}=\{a^{K+1}_1\},\\
B_{K+1}\cap B_{K+2}&=\{b^{K+1}_{1}\} =\{a^{K+2}_{2}\}, \dots %\\
\end{align*}
\begin{align*}
\dots, B_{2K-2}\cap B_{2K-1} &= \{b_{K-2}^{2K-2}\}=\{a_{K-1}^{2K-1}\},\\
B_{2K-1}\cap B_{2K} = B_{2K}\cap  B_{2K+1}&=\{b^{2K-1}_{K-1}\}=\{a^{2K}_{1}\}=\{a^{2K+1}_{1}\},\\
B_{2K+1}\cap B_{2K+2}&=\{b^{2K+1}_{1}\} =\{a^{2K+2}_{2}\},\dots %\\
\end{align*}
\begin{align*}
%\vdots\\
\dots, B_{uK-2}\cap B_{uK-1} &= \{b^{uK-2}_{K-2}\}=\{a^{uK-1}_{K-1}\},\\
B_{uK-1}\cap B_{uK}&=\{b^{uK-1}_{K-1}\}=\{a^{uK}_{1}\}.
\end{align*}
All other intersections are empty.
In general, for $\ell$ a multiple of $K$,
%, 2K, \dots, (u-1)K\}$
and for $1\leq i < K$, we have
\begin{align*}
B_{\ell-1}\cap B_{\ell} = B_{\ell}\cap  B_{\ell+1}&=\{b^{\ell-1}_{K-1}\}=\{a^{\ell}_{1}\}=\{a^{\ell+1}_{1}\},\\
B_{\ell+i} \cap
B_{\ell+i+1} &= \{b_i^{\ell+i}\} = \{a_{i+1}^{\ell+i+1}\}.
\end{align*}
\begin{figure}[h!]
  \centering
{\scalefont{.7}
  \begin{tikzpicture}[scale=.52]
    % B0
    \draw (0, 3.4) node {$B$};
    \draw (0,3) ellipse (.6cm and 1.2cm);
    % B1
    \draw (1,1.9) node {$B_1$};
    \draw (1,2) ellipse (1.2cm and .6cm);
    % B2
    \draw (3,1.9) node {$B_2$};
    \draw (3,2) ellipse (1.2cm and .6cm);
   \draw[font=\Large] (5, 2) node {$\cdots$};
    \draw (6.8,1.9) node {$B_{K-2}$};
    \draw (7,2) ellipse (1.2cm and .6cm);
    \draw (9,1.8) node {$B_{K-1}$};
    \draw (9,2) ellipse (1.2cm and .6cm);
     \draw (10,3.4) node {$B_{K}$};
     \draw (10,3) ellipse (.6cm and 1.2cm);
    \draw (11,1.8) node {$B_{K+1}$};
    \draw (11,2) ellipse (1.2cm and .6cm);
    \draw (13.2,1.9) node {$B_{K+2}$};
    \draw (13,2) ellipse (1.2cm and .6cm);
   \draw[font=\Large] (15, 2) node {$\cdots$};
   \node (1) at (0,2) [fill,circle,inner sep=.6pt] {};
   \draw (0, .6) node {$a_1{=}a_1^1$};
   \draw (0, 1) to  (0,1.9);
   \node (2) at (2,2) [fill,circle,inner sep=.6pt] {};
   \draw (2, 3.3) node {$b^1_1 {=} a^2_2$};
   \draw (2, 3) to  (2,2.1);
   \node (3) at (4,2) [fill,circle,inner sep=.6pt] {};
   \draw (4, .6) node {$b^2_2 {=} a^3_3$};
   \draw (4, 1) to  (4,1.9);
   \node (5) at (8,2) [fill,circle,inner sep=.6pt] {};
   \draw (7.5, 3.4) node {$b^{K-2}_{K-2} {=} a^{K-1}_{K-1}$};
    \draw (8, 3) to  (8,2.1);
   \node (6) at (10,2) [fill,circle,inner sep=.6pt] {};
   \draw (10, .6) node {$b^{K-1}_{K-1} {=} a^{K}_{1}{=} a^{K+1}_{1}$};
   \draw (10, 1) to  (10,1.9);
   \node (7) at (12,2) [fill,circle,inner sep=.6pt] {};
   \draw (12.5, 3.4) node {$b^{K+1}_{1} {=} a^{K+2}_{2}$};
   \draw (12, 3) to  (12,2.1);
   \draw[font=\Large] (-.3, 2-5) node {$\cdots$};
    \draw (15-13.5,1.9-5) node {$B_{2K-2}$};
    \draw (15-13.3,2-5) ellipse (1.35cm and .6cm);
    \draw (16.9-13,1.8-5) node {$B_{2K-1}$};
    \draw (17-13.1,2-5) ellipse (1.35cm and .6cm);
    \draw (18-13,3.3-5) node {$B_{2K}$};
    \draw (18-13,3-5) ellipse (.6cm and 1.2cm);
    \draw (19.1-12.9,1.8-5) node {$B_{2K+1}$};
    \draw (19-12.9,2-5) ellipse (1.35cm and .6cm);
    \draw (19.1-10.6,1.9-5) node {$B_{2K+2}$};
    \draw (19-10.6,2-5) ellipse (1.35cm and .6cm);
   \draw[font=\Large] (21-10.6, 2-5) node {$\cdots$};
   \node (9) at (16-13.2,2-5) [fill,circle,inner sep=.6pt] {};
   \draw (15.5-13.2, 3.4-5) node {$b^{2K-2}_{K-2}{=}a^{2K-1}_{K-1}$};
   \draw (15.5-13.2, 3-5) to  (16-13.2,2.1-5);
   \node (10) at (18-13,2-5) [fill,circle,inner sep=.6pt] {};
   \draw (18-13, .6-5) node {$b^{2K-1}_{K-1} {=} a^{2K}_{1}{=} a^{2K+1}_{1}$};
   \draw (18-13, 1-5) to  (18-13,1.9-5);
   \node (11) at (20-12.8,2-5) [fill,circle,inner sep=.6pt] {};
   \draw (20-12.2, 3.4-5) node {$b^{2K+1}_{1}{=}a^{2K+2}_{2}$};
   \draw (20-12.5, 3-5) to  (20-12.8,2.1-5);

   %% new %%
   \draw[font=\Large] (13-.2, 2-5) node {$\cdots$};
    \draw (14.8-.3,1.9-5) node {$B_{uK-2}$};
    \draw (15-.3,2-5) ellipse (1.35cm and .6cm);
    \draw (16.9,1.8-5) node {$B_{uK-1}$};
    \draw (17-.1,2-5) ellipse (1.35cm and .6cm);
    \draw (18,3.3-5) node {$B_{uK}$};
    \draw (18,3-5) ellipse (.6cm and 1.2cm);
   \node (9) at (16-.2,2-5) [fill,circle,inner sep=.6pt] {};
   \draw (15.5-.2, 3.4-5) node {$b^{uK-2}_{K-2}{=}a^{uK-1}_{K-1}$};
   \draw (15.5-.2, 3-5) to  (16-.2,2.1-5);
   \node (10) at (18,2-5) [fill,circle,inner sep=.6pt] {};
   \draw (18, .6-5) node {$b^{uK-1}_{K-1} {=} a^{uK}_{1}$};
   \draw (18, 1-5) to  (18,1.9-5);
  \end{tikzpicture}
}
  \caption{The universe of an overalgebra of the second type.}
\label{fig:OveralgebrasIII}
\end{figure}
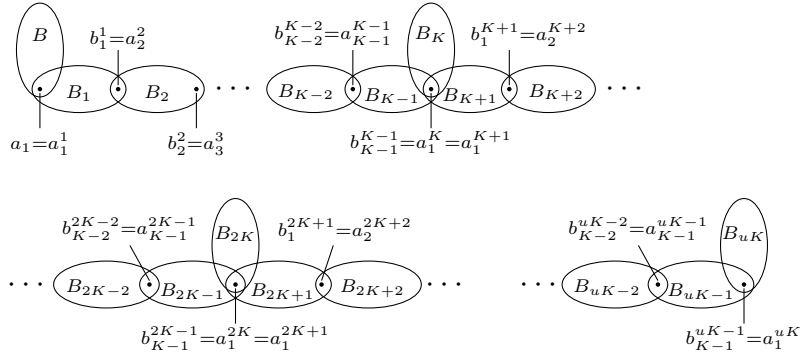

As usual, we put $A\defeq B_0\cup \dots\cup B_{uK}$ and proceed to define
some unary operations on $A$.
First, for $0\leq i, j \leq uK$, let $S_{i,j}\colon B_i \rightarrow B_j $ be the
bijection $S_{i,j}(x\supi)=x\supj$, and note that $S_{i,i} = \id_{B_i}$.
Let $\sT = |\sT_1|\sT_2| \cdots|\sT_N|$ be an arbitrarily chosen partition of the set
$\{0, K, 2K, \dots, uK\}$.  For each $1\leq n \leq N$, for each $\ell \in \sT_n$, define
\[
e_{\ell}(x)=
\begin{cases}
S_{j,\ell}(x), &\text{ if $x\in B_{j}$ for some $j \in \sT_n$,}\\
a^{\ell}_1, &\text{ otherwise.}
\end{cases}
\]
For each
 $\ell \in \{0, K, 2K, \dots, (u-1)K\}$, for each
$1\leq i < K$, define
\[
e_{\ell+i}(x)=
\begin{cases}
a_i^{\ell+i}, &\text{ if $x\in B_{j}$ for some $j < \ell+i$,}\\
x, &\text{ if $x\in B_{\ell+i}$,}\\
b_i^{\ell+i}, &\text{ if $x\in B_{j}$ for some $j > \ell+i$.}
\end{cases}
\]
In other words, if $\ell$ is a multiple of $K$, then $e_{\ell}$ maps each up-pointing set in
Figure~\ref{fig:OveralgebrasIII} in the same $\sT$-block  as $\ell$
bijectively onto the up-pointing set $B_{\ell}$, and maps all other points of $A$ to the tie-point
$a^{\ell}_1\in B_{\ell}$.
For each set $B_{\ell+i}$ in between---represented in the
figure by an ellipse with horizontal major axis---there is a map
$e_{\ell+i}$ which act as the identity on $B_{\ell+i}$ and maps all points in
$A$ left of $B_{\ell+i}$ to the left tie-point %$a_i^{\ell+i}$
of $B_{\ell+i}$ and all points to
the right of $B_{\ell+i}$ to the right tie-point %$b_i^{\ell+i}$
of $B_{\ell+i}$.
Finally, for $0\leq i, j\leq uK$, we define
$q_{i,j}=S_{i,j}\circ e_i$ and take the set of basic operations on $A$ to be
\[
F_A \defeq  \{f e_0 \suchthat f\in F_B\}  \cup \{q_{i,0} \suchthat 0\leq i \leq uK\}\cup \{q_{0,j} \suchthat 1\leq j \leq uK\}.
\]
We are now ready to define the overalgebra as $\bA \defeq  \< A, F_A\>$.

In this overalgebra construction the significance of a particular congruence of
$\bB$---namely,
$\beta = \Cg^{\bB}((a_1, b_1), \dots, (a_{K-1},b_{K-1}))$---is
more explicit than in the overalgebra construction of
Section~\ref{sec:overalgebras-i}.  The following theorem describes the
$\resB$-inverse image of this special congruence, that is, the interval
$[\beta^*,\hbeta]$ in $\Con \bA$.   As above, assume $\beta$ has $m$
congruence classes, denoted by 
%bd $C_r \, (1\leq r\leq m)$, 
$C_r$ $(1\leq r\leq m)$,
let $C_r^j$ denote $S_{0,j}(C_r)$,
and let $\beta^j$ denote $S_{0,j}(\beta)$.
\begin{theorem}
\label{thm:overalgebras-ii}
Let $\bA = \< A, F_A\>$ be the overalgebra described above,
and for each $0\leq j \leq uK$ let $t_j$ denote a tie-point of the set
$B_j$.  Define
\begin{equation}
  \label{eq:OA2-star}
\beta^\star = \bigcup_{j=0}^{uK} \betaj \cup
\bigl(\bigcup_{j=0}^{uK}t_j/\betaj\bigr)^2.
\end{equation}
and
\begin{equation}
  \label{eq:5}
\tbeta   = \beta^\star \cup \bigcup_{r=1}^{m-1}\bigcup_{n=1}^N \bigl(\bigcup_{\ell \in \sT_n} C_r^\ell\bigr)^2
\end{equation}
Then,
\begin{enumerate}[\rm(i)] %bd \rm added
\item \label{item:OA2-i}
$\beta^\star=\beta^*$, the minimal $\theta\in \Con\bA$ such that $\theta\resB = \beta$;
\item
\label{item:OA2-ii}
$\tbeta=\widehat{\beta}$, the maximal $\theta \in \Con\bA$ such that $\theta\resB = \beta$;
\item
\label{item:OA2-iii}
the interval $[\beta^\star, \tbeta]$ in $\Con\bA$
satisfies
$[\beta^\star, \tbeta] \cong \prod_{n=1}^N(\Eq|\sT_n|)^{m-1}$.
\end{enumerate}
\end{theorem}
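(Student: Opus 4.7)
The strategy closely parallels the proof of Theorem~\ref{thm1}. For part~(\ref{item:OA2-i}), I would first verify that $\beta^\star$ is a congruence of $\bA$ by checking that each basic operation preserves its blocks. The blocks of $\beta^\star$ are the non-tie-point $\beta^j$-classes together with the single ``big'' block $\bigcup_{j=0}^{uK} t_j/\beta^j$ that glues together all the tie-point classes. For $fe_0$ with $f\in F_B$, the image lies inside one $\beta$-class of $B$. The map $q_{0,j} = S_{0,j}\circ e_0$ first routes everything into $B$ (bijectively on up-pointing sets in $0$'s $\sT$-block and to the tie-point $a_1$ otherwise), and then bijectively into $B_j$; the image of any $\beta^\star$-block lies in a single $\beta^j$-class. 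A symmetric case analysis on whether $i$ is a multiple of $K$ handles $q_{i,0}$. Then $\beta^\star\resB = \beta$ is immediate. For minimality, if $\beta \subseteq \eta \in \Con\bA$, applying each $q_{0,j}$ yields $\beta^j \subseteq \eta$ for every $j$; the chain of tie-point identifications, together with the generator pairs $(a_i,b_i) \in \beta$ transported into each $B_j$, then links all tie-points of $A$ by transitivity, forcing the second term of~(\ref{eq:OA2-star}) into $\eta$.

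For part~(\ref{item:OA2-ii}), the closure check for $\tbeta$ reduces to the new blocks $\bigcup_{\ell\in \sT_n} C_r^\ell$ with $r \leq m-1$. When $\ell' \in \sT_n$, the idempotent $e_{\ell'}$ bijectively carries each $C_r^\ell$ ($\ell \in \sT_n$) to $C_r^{\ell'}$, which $S_{\ell',0}$ then sends into the single $\beta$-class $C_r$. For $\ell' \notin \sT_n$ or $\ell'$ horizontal, the image collapses to a single tie-point or to a pair of tie-points $\{a_i^{\ell'}, b_i^{\ell'}\}$ already $\beta^{\ell'}$-related. So $\tbeta \in \Con\bA$ and $\tbeta\resB = \beta$, whence the residuation lemma of Section~\ref{sec:residuation-lemma} gives $\tbeta \leq \hbeta$. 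The substantive step is the reverse inclusion: given $(x,y) \notin \tbeta$, I need to exhibit a unary polynomial $h$ with $(e_0h(x), e_0h(y)) \notin \beta$. If $e_0(x)$ and $e_0(y)$ already lie in distinct $\beta$-classes, take $h = \id_A$. Otherwise $x \in C_r^j$ and $y \in C_r^k$ where the pair $(j,k)$ fails to sit inside any single $\sT_n$; choose an up-pointing $\ell$ in the $\sT$-block of $j$ but not that of $k$, so that $q_{\ell,0}$ sends $x$ into $C_r$ and $y$ to the tie-point $a_1$, which lies in a $\beta$-class different from $C_r$ since $r \leq m-1$.

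For part~(\ref{item:OA2-iii}), the analysis in~(\ref{item:OA2-ii}) shows that every $f \in F_A$ maps $\tbeta$-blocks either into $\beta^\star$-blocks or to a single tie-point. Consequently, every equivalence relation $\theta$ with $\beta^\star \subseteq \theta \subseteq \tbeta$ automatically satisfies $f(\theta) \subseteq \theta$, so the interval $[\beta^\star, \tbeta]$ in $\Con\bA$ coincides with the same interval in $\Eq(A)$. Applying the counting formula~(\ref{eq:basicinterval}): for each pair $(r,n)$ with $1 \leq r \leq m-1$, the $\tbeta$-block $\bigcup_{\ell \in \sT_n} C_r^\ell$ decomposes into $|\sT_n|$ many $\beta^\star$-blocks, contributing a factor of $\Eq|\sT_n|$; all other $\tbeta$-blocks are already single $\beta^\star$-blocks and contribute trivially. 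This yields the required isomorphism $[\beta^\star,\tbeta] \cong \prod_{n=1}^N(\Eq|\sT_n|)^{m-1}$.

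I expect the main obstacle to be the maximality argument in~(\ref{item:OA2-ii}). Because the $B_j$'s intersect in a chain rather than at a common hub, the horizontal idempotents $e_{\ell+i}$ behave quite differently from the up-pointing $e_\ell$, and several sub-cases (depending on which $\sT$-blocks the indices $j,k$ belong to, and on whether either of them is horizontal or straddles a multiple of $K$) must be handled separately when exhibiting the separating polynomial. The labeling convention placing the tie-point class last (making the restriction $r \leq m-1$ meaningful) is also a subtle ingredient that must be handled carefully in order for the construction $q_{\ell,0}$ to actually produce a witness of non-$\beta$-relatedness.
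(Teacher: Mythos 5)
Your proposal is correct and follows essentially the same route as the paper: verify that $\beta^\star$ and $\tbeta$ are congruences by tracking how each $q_{i,0}$, $q_{0,j}$, and $ge_0$ acts on blocks, obtain minimality/maximality via the residuation lemma, and count the interval with~(\ref{eq:basicinterval}). In fact, on the maximality step of part~(\ref{item:OA2-ii}) you supply more detail than the paper, which merely asserts that a separating polynomial is ``easy to verify''; your choice of $q_{\ell,0}$ for an up-pointing $\ell$ in the $\sT$-block of one index but not the other is exactly the right witness, modulo the horizontal sub-cases you correctly flag.
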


\begin{proof}
%bd\begin{enumerate}[(i)]
%\item
 (i) It is easy to see that $\beta^\star$ is an equivalence relation on $A$, so we first
  check that $f(\beta^\star)\subseteq \beta^\star$ for all $f\in F_A$, thereby
  establishing that $\beta^\star\in \Con\bA$. Then we show that
  $\beta \subseteq \eta \in \Con\bA$ implies $\beta^\star\leq \eta$.

For all $0\leq i,j \leq uK$ and $1\leq r \leq m$, either $e_i(C_r^j)$ is a
singleton or, in case $i$ and $j$ are multiples of $K$ in the same
block of $\sT$, we have
$e_i(C_r^j) = S_{j,i}(C_r^j) = C_r^i$.  Therefore,
$q_{i,0}(C_r^j) =S_{i,0}e_i(C_r^j)$ is
either a singleton or $C_r$, so
$q_{i,0}(\bigcup_{j=0}^{uK}\beta^j) \subseteq \beta$.
Similarly, since $e_0(C_r^j)$ is either $\{a_1\}$ or $C_r$, we see that
$q_{0,i}(C_r^j) = S_{0,i}e_0(C_r^j)$ is either $\{a_1^i\}$ or $C_r^i$. Also, for each $g\in F_B$
we have $ge_0(C_r^j)\subseteq C_k$ for some $0\leq k\leq m$.
Therefore, $f(\bigcup_{j=0}^{uK}\beta^j)\subseteq \beta^{\star}$ for all $f\in F_A$.

Let $\sB \defeq  \bigcup_{j=0}^{uK}t_j/\betaj$.
Then for each $0\leq i \leq uK$ we have $e_i(\sB) = t_i/\betai$, so
$q_{i,0}(\sB) = S_{i,0}(t_i/\betai) = C_k$, for some $1\leq k \leq m$.
Since $e_0(\sB) = a_1/\beta = C_1$, it is clear that $q_{0,j}(\sB) = S_{0,j}e_0(\sB) =
C_1^j$.  Also, for each $g\in F_B$, we have $g e_0(\sB) = g(C_1) \subseteq C_k$
for some $1\leq k\leq m$.  Therefore, $f(\sB)^2 \subseteq \beta^\star$ for all
$f\in F_A$.  We conclude that $\beta^\star \in \Con \bA$.

Next, suppose $\beta\subseteq \eta \in \Con \bA$.  Then for all $0\leq j \leq
uK$ we have $q_{0,j}(\beta) = S_{0,j}e_0(\beta) = \beta^j \subseteq \eta$.
For each
$\ell \in \{0, K, 2K, \dots, (u-1)K\}$ and $0\leq i <K$, the tie-points of
$B_{\ell+i}$ are $a_1^\ell$ if $i=0$, and $\{a_i^{\ell+i}, b_i^{\ell+i}\}$ if
$i>0$.  Thus, from the fact that $(a_i^{\ell+i}, b_i^{\ell+i})\in
\beta^{\ell+i}$ and from the overlapping of $B_{\ell+i}$ and
$B_{\ell+i+1}$ it follows
by transitivity that any equivalence relation on $A$ containing all $\beta^j$, $0\leq
j \leq uK$, must have a single equivalence class containing all the tie-points of $A$.
% Also, for each $t_i/\betai$ there exists $r$ such that $C_r^i
% = t_i/\betai$, so $q_{0,i}(C_r) = C_r^i$.  Therefore $\beta\subseteq \eta$
% implies $(t_i/\betai)^2\subseteq \eta$ for all $0\leq i\leq uK$.
We have just seen that $\eta$ contains all $\beta^j$, so
$\bigl(\bigcup_{j=0}^{uK}t_j/\betaj\bigr)^2\subseteq \eta$.
Therefore, $\beta^\star \leq \eta$.

\smallskip %bd added
%bd\item
(ii) We first show $\tbeta\in \Con\bA$.
Fix $1\leq r \leq m$ and $1\leq n\leq N$ and let
\[
\CE\defeq  \bigcup_{\ell \in \sT_n} C_r^\ell.
\]
Thus, $\CE$ is the join of corresponding %$r$-th
$\beta$ blocks of
up-pointing sets in Figure~\ref{fig:OveralgebrasIII} from the same block
$\sT_n$.

If $\ell \in \{0, K, 2K, \dots, (u-1)K\}$ and $1\leq i< K$, then $B_{\ell+i}$ has tie-points
$\{a_i^{\ell+i}, b_i^{\ell+i}\}$, and the mapping $e_{\ell+i}$ takes the set
$\CE$ onto this pair of tie-points.
Thus, if $\ell \in \{0, K, 2K, \dots, (u-1)K\}$, $1\leq i< K$, and
$k=\ell+i$, then
$q_{k,0}(\CE) = \{a_i, b_i\}$.
It follows that $q_{k,0}(\tbeta)\subseteq \beta$ for all
$k\notin \{0, K, 2K, \dots, uK\}$.

If $k\in \{0, K, 2K, \dots, uK\}\setminus \sT_n$, then
$e_k(\CE) = \{a_1^k\}$ so
$q_{k,0}(\CE) = \{a_1\}$. If
$k\in \sT_n$, then $e_k(\CE) = C_r^k$ so $q_{k,0}(\CE) = C_r$.  It follows that
$q_{k,0}(\tbeta)\subseteq \beta$ for all $0\leq k \leq uK$.

Next, if $g\in F_B$ then, since $e_{0}(\CE)$ is either $C_r$ or
$\{a_1\}$, the operation
$ge_0$ takes $\CE$ to a single $\beta$ class.
Finally, for each $0 \leq k \leq uK$ the map
$q_{0,k}$ takes the set $\CE$ to either $C_r^k$ or $\{a_1^k\}$.
Therefore, $q_{0,k}(\tbeta)\subseteq \tbeta$, and we have proved
$f(\tbeta)\subseteq \tbeta$ for all $f\in F_A$.

Since the
restriction of $\tbeta$ to $B$ is clearly $\tbeta \resB = \beta$, the
residuation lemma yields $\tbeta \leq \hbeta$.  On the other hand, it is
easy to verify that for each $(x,y)\notin
\tbeta$ there is an operation $f\in \Pol_1(\bA)$ such that $(e_0f(x),
e_0f(y))\notin \beta$, and thus $(x,y)\notin \hbeta$.  Therefore, $\tbeta \geq \hbeta$.

\smallskip %bd added
%bd\item
(iii) It remains to prove
$[\beta^\star, \tbeta] \cong \prod_{n=1}^N(\Eq|\sT_n|)^{m-1}$.
This follows easily from the proof of (\ref{item:OA2-ii}).  For,
in proving that $\tbeta$ is a congruence, we showed that each
operation $f\in F_A$ maps blocks of $\tbeta$ into blocks of
$\beta^\star$.  That is, each operation collapses the interval $[\beta^\star,
\tbeta]$.  Therefore, every equivalence relation on the set $A$ that lies
between $\beta^\star$ and $\tbeta$ is respected by every operation of $\bA$. In
other words, as an interval in $\Con \bA$,
\[
[\beta^\star, \tbeta] = \{\theta \in \Eq(A)\suchthat \beta^\star \leq \theta \leq \tbeta\}.
\]
In view of the configuration of the universe of $\bA$,
as shown in Figure~\ref{fig:OveralgebrasIII}, and by the same argument used to prove
(\ref{item-iii}) of Theorem~\ref{thm1} (cf.~(\ref{eq:basicinterval})),
it is clear that the interval sublattice
$\{\theta \in \Eq(A)\suchthat \beta^\star \leq \theta \leq \tbeta\}$
is isomorphic to $\prod_{n=1}^N(\Eq|\sT_n|)^{m-1}$.
%bd \end{enumerate}
\end{proof}

In the next theorem, we continue to assume that $\bA = \< A, F_A\>$ is an
overalgebra of the second type---as illustrated in
Figure~\ref{fig:OveralgebrasIII}---based on the algebra $\bB$, the congruence
$\beta = \Cg^{\bB}((a_1, b_1), \dots, (a_{K-1},b_{K-1}))$, and the partition
$\sT = |\sT_1|\sT_2| \cdots |\sT_N|$ of the set  $\{0, K, 2K, \dots, uK\}$.
We remind the reader that $\theta^*$ denotes $\Cg^\bA(\theta)$, for
$\theta\in \Con \bB$.

\begin{theorem}
\label{thm3}
Let $\theta\in \Con \bB$ and suppose $\theta$ has $r$ congruence classes.
Then, $\theta^* < \widehat{\theta}$ if and only if
$\beta\leq \theta < 1_B$, in which case $[\theta^*, \widehat{\theta}] \cong
\prod_{n=1}^N(\Eq|\sT_n|)^{r-1}$.
\end{theorem}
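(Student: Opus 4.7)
The plan is to mirror the proof of Theorem~\ref{thm:overalgebras-ii} with $\theta$ in place of $\beta$, and to pinpoint where the special role of $\beta$ is actually used in that argument. The biconditional naturally splits into two implications, with $\theta = 1_B$ handled as a short separate case on the trivial-interval side.

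For the ``if'' direction, assume $\beta \leq \theta < 1_B$ and let $D_1, \dots, D_r$ be the congruence classes of $\theta$. In direct analogy with~(\ref{eq:OA2-star}) and~(\ref{eq:5}), define
\[
\theta^\star = \bigcup_{j=0}^{uK}\theta^{j} \cup \bigl(\bigcup_{j=0}^{uK}t_j/\theta^{j}\bigr)^2,
\qquad
\ttheta = \theta^\star \cup \bigcup_{s=1}^{r-1}\bigcup_{n=1}^{N}\bigl(\bigcup_{\ell\in\sT_n}D_s^\ell\bigr)^2,
\]
where $\theta^{j} = S_{0,j}(\theta)$, the classes $D_s^\ell = S_{0,\ell}(D_s)$ are the transported classes of $\theta$, and the indexing is arranged so that the ``special'' $\theta$-class containing $a_1$ is omitted from the union over $s$. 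The hypothesis $\beta \leq \theta$ is used in exactly one place: it guarantees that each pair $(a_i, b_i)$ lies in $\theta$, which is precisely what keeps the tie-point merges forced by $\theta^\star$ compatible with restriction $\theta$ on $B$. Granted this observation, the verifications of parts (i)--(iii) of Theorem~\ref{thm:overalgebras-ii} go through essentially verbatim, yielding $\theta^\star = \theta^*$, $\ttheta = \htheta$, and $[\theta^*, \htheta] \cong \prod_{n=1}^N(\Eq|\sT_n|)^{r-1}$.

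For the ``only if'' direction, I would establish $\theta^* = \htheta$ in the two remaining cases. When $\theta = 1_B$, $\htheta = 1_A$ is immediate from the definition of $\hatmap$, and $1_B^* = 1_A$ follows because $q_{0,j}(B^2) = B_j^2 \subseteq 1_B^*$ for every $j$, after which tie-point overlaps and transitivity collapse all of $A$ into a single class. When $\beta \not\leq \theta$, fix some $i_0$ with $(a_{i_0}, b_{i_0}) \notin \theta$ and suppose, toward a contradiction, that some $(x, y) \in \htheta \setminus \theta^*$ exists. Pairs entirely within $B$ lying in $\htheta$ already belong to $\theta \subseteq \theta^*$, so necessarily $x \in B_{j_1}$ and $y \in B_{j_2}$ with $j_1 \neq j_2$. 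For any $\ell$ a multiple of $K$ in $\{0, K, \dots, (u-1)K\}$ and any $1 \leq i < K$ with $j_1 < \ell + i < j_2$, the operation $q_{\ell + i, 0}$ sends $(x, y)$ to $(a_i, b_i)$ by the definition of $e_{\ell + i}$, so $(a_i, b_i) \in \theta$ is forced for every such pair $(\ell, i)$. If such an $\ell$ exists with $i = i_0$, the contradiction is immediate; the main obstacle is the complementary ``short-range'' case, in which no admissible $\ell$ works for the distinguished index $i_0$. In that case one must show directly that $(x, y)$ is already in $\theta^*$, by chasing the chain of shared tie-points $b_i^j = a_{i+1}^{j+1}$ and $b_{K-1}^{mK-1} = a_1^{mK} = a_1^{mK+1}$ from $B_{j_1}$ to $B_{j_2}$ and using the forced constraints $(a_i, b_i) \in \theta$ (for the indices $i$ that do lie in the straddling range $(j_1, j_2)$) as exactly the merges needed to link the $\theta^j$-classes of $x$ and $y$ across the intermediate subreducts.
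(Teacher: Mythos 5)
Your proposal is correct and follows essentially the same route as the paper: the ``if'' direction reruns the proof of Theorem~\ref{thm:overalgebras-ii} with $\theta$ in place of $\beta$ (the hypothesis $\beta\leq\theta$ entering exactly where you say, to make the tie-point class $t_j/\theta^j$ well defined), and your ``only if'' analysis---forcing $(a_i,b_i)\in\theta$ via the straddling maps $q_{\ell+i,0}$ and otherwise chasing the chain of shared tie-points to land $(x,y)$ in $\theta^*$---is precisely the content of the paper's Lemma~\ref{lem3.1}. Your explicit treatment of $\theta=1_B$ is a small point the paper leaves implicit; the only detail you gloss over is the case $x,y\in B_j$ with $j\neq 0$, which is dispatched by applying $q_{0,j}q_{j,0}$ exactly as in the $i=j$ case of Lemma~\ref{lem3.1}.
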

%bd \noindent 
Consequently, if $\theta \ngeq \beta$, then $\widehat{\theta} = \theta^*$.
The proof of the theorem follows easily from the next lemma.
\begin{lemma}
\label{lem3.1}
Suppose $\eta \in \Con\bA$ satisfies $\eta\resB = \theta \in \Con \bB$ and $(x,y) \in \eta \setminus \theta^*$ for some
$x\in B_{i}$,  and $y\in B_{j}$.  Then $i$ and $j$ are distinct multiples of $K$ belonging to the same
block of $\sT$, and $\theta \geq \beta$.
\end{lemma}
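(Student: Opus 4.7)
My plan is to prove the lemma by strong induction on the \emph{spread} $d(x,y) = \min\{|i'-j'| : x \in B_{i'},\ y \in B_{j'}\}$. The key preliminary observation is that $e_k = q_{0,k}\circ q_{k,0}$ lies in $\Pol_1(\bA)$ for every $0\leq k\leq uK$; hence for any $(u,v)\in \eta$ the pair $(q_{k,0}(u),q_{k,0}(v))$ lies in $\eta\cap B^2 = \theta$, and applying $q_{0,k}$ then gives $(e_k(u),e_k(v))\in \theta^*$. The same calculation shows $\eta \cap B_k^2 = \theta^k$ for every $k$, so $\eta$ and $\theta^*$ coincide on each subreduct.

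In the base case $d(x,y)=0$, a common $B_k$ contains both $x$ and $y$, so $(x,y)=(e_k(x),e_k(y))\in \theta^*$ contradicts the hypothesis. For the inductive step, choose $i<j$ realizing the minimum spread $D$; minimality forces $x\notin B_{i+1}$ and $y\notin B_{j-1}$. If $i,j$ are both multiples of $K$ lying in a common $\sT$-block, the index claim is satisfied; to see $\beta\leq \theta$, take any $1\leq s\leq K-1$ and set $k=i+s$. Since $j\geq i+K$, $k$ lies strictly between $i$ and $j$ and is not a multiple of $K$, so $e_k(x)=a_s^k$ and $e_k(y)=b_s^k$; then $(a_s^k,b_s^k)\in \theta^*\cap B_k^2 = \theta^k$ yields $(a_s,b_s)\in \theta$. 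Varying $s$ gives $\beta\leq \theta$.

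In every other configuration the goal is to derive $(x,y)\in \theta^*$ and so contradict the hypothesis. The uniform tactic is to use an appropriate $e_k$ to collapse $y$ to a tie-point $t$, so that $(x,t)\in \theta^*$ and $(t,y)\in \eta$, and then apply the IH to the auxiliary pair $(t,y)$, using that $t$'s valid indices cannot furnish an index-witness for the lemma's conclusion. This settles in one stroke the case where $i,j$ are multiples of $K$ in different $\sT$-blocks (take $k=i$: $e_i(y)=a_1^i$, whose only multiple-of-$K$ valid index is $i$ itself, lying in the wrong block relative to $j$) and the sideways case $i=\ell+s$ with $0<s<K-1$ (take $k=i$: $e_i(y)=b_s^i$, whose valid indices $\{i,i+1\}$ are both non-multiples of $K$).

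The single delicate subcase is $i=\ell+K-1$: the tie-point $b_{K-1}^i=a_1^{i+1}$ lies in the up-pointing set $B_{i+1}$, so the IH applied to $(b_{K-1}^i,y)$ may not directly collapse the pair but instead conclude that $i+1$ and $j$ are distinct multiples of $K$ in a common $\sT$-block $\sT_n$, with $\beta\leq \theta$. In that event I apply $e_{\ell'}$ for $\ell'\in \sT_n$: since $i\notin \sT_n$ this gives $e_{\ell'}(x)=a_1^{\ell'}$ and $e_{\ell'}(y)=S_{j,\ell'}(y)$, and the key observation then yields $(a_1^{\ell'},S_{j,\ell'}(y))\in \theta^{\ell'}$, forcing $y\in C_1^j$ and hence $(a_1^j,y)\in \theta^j\subseteq \theta^*$. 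Transitivity gives $(x,a_1^j)\in \eta$, and a second IH application to $(x,a_1^j)$ --- legitimate because minimality of $D$ forces $x\neq b_{K-1}^i$, so $x$'s valid indices are contained in $\{i-1,i\}$, both non-multiples of $K$ --- yields $(x,a_1^j)\in \theta^*$, completing the contradiction. The main obstacle is precisely this double IH application and the careful bookkeeping of valid indices for tie-points, where the minimality of the spread $D$ is essential to exclude the ``bad'' tie-point option $x=b_{K-1}^i$.
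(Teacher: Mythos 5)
Your induction on the spread is a genuinely different strategy from the paper's, which avoids induction entirely: there one applies \emph{every} intermediate operation $q_{k,0}$ ($i<k<j$) to $(x,y)$ to land pairs of tie-points in $\eta\resB=\theta$, lifts each back with $q_{0,k}$, and splices the results into one explicit $\theta^*$-chain $x \mathrel{\theta^*} b_p^{i}=a_{p+1}^{i+1}\mathrel{\theta^*} b_{p+1}^{i+1}=\cdots \mathrel{\theta^*} y$ through the consecutive tie-points, which closes except when $i$ and $j$ are distinct multiples of $K$ in the same $\sT$-block. Your preliminary observations ($e_k=q_{0,k}q_{k,0}\in\Pol_1(\bA)$, $\eta\cap B_k^2=\theta^k$, hence $(e_k(x),e_k(y))\in\theta^*$) are correct, and your case (a) matches the paper's final paragraph. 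But the inductive step has genuine gaps. First, the case analysis is not exhaustive: cases (a) and (b) require $j$ to be a multiple of $K$, while the ``sideways'' and ``delicate'' cases require $i$ not to be, so the configuration ``$i$ a multiple of $K$, $j$ not'' is never treated. Second, you never fix which valid index pair the inductive hypothesis speaks about. The lemma asserts the conclusion for \emph{every} pair $(i,j)$ with $x\in B_i$, $y\in B_j$; your step only addresses the minimal-spread pair, yet your ``cannot furnish an index-witness'' exclusions range over \emph{all} valid indices of the auxiliary tie-point, i.e., they tacitly invoke the universally quantified form. Third, and most concretely, the delicate subcase can fail: the multiple-of-$K$ index $j'$ of $y$ returned by the IH need not equal $j$ (it can be $j+1$ when $y$ is itself the tie-point $b_{K-1}^{j}=a_1^{j+1}$), in which case $a_1^{j'}=y$, the relation $(a_1^{j'},y)\in\theta^{j'}$ is vacuous, and the ``second IH application'' is to the original pair $(x,y)$ --- circular. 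Even when $a_1^{j'}\neq y$, the spread of $(x,a_1^{j'})$ is $(j'-1)-i$, which is smaller than $D$ only if $j'=j$, so that application is not justified in general.

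All three problems have a common cure: carry the universally quantified conclusion through the induction. Then any auxiliary tie-point $t$ automatically possesses a valid index that is \emph{not} a multiple of $K$ (e.g., $i+1$ or $i+2$ in your cases), so the IH forces $(t,y)\in\theta^*$ outright; the delicate subcase evaporates, and the missing case ``$i$ a multiple of $K$, $j$ not'' is handled by the same one-line argument via $e_i(y)=a_1^{i}$. As written, however, the proof does not establish the lemma.
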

\begin{proof}
Assume $\eta \in \Con\bA$ satisfies $\eta\resB = \theta$ and $(x,y) \in \eta \setminus \theta^*$ for some
%bd $x\in B_i, \, y\in B_j$.
$x\in B_i, y\in B_j$.
If $i=j$, then $(x,y)\in \eta \cap B_i^2$ and
$(q_{i,0}(x),q_{i,0}(y))\in \eta\resB = \theta \leq \theta^*$, so
$(x,y) = (q_{0,i} q_{i,0}(x),q_{0,i} q_{i,0}(y))\in \theta^*$, a contradiction.
In fact, we will reach this same contradiction, $(x,y)\in  \theta^*$, as long as
$i$ and $j$ do not belong to the same block of $\sT$.

We have already handled the case $i=j$, so we may suppose $0\leq i < j \leq uK$.
In order to make the simple idea of the proof more transparent, we first
consider the special case in which $1\leq i < j < K$.
In this case, we have
$q_{i,0}(y)= b_{i}$, so
$(x,y)\in \eta$ implies
$(q_{i,0}(x),q_{i,0}(y)) =
(q_{i,0}(x), b_{i}) \in \eta\resB = \theta$.  Similarly,
$q_{j,0}(x)= a_{j}$, so
$(q_{j,0}(x),q_{j,0}(y)) = (a_{j},q_{j,0}(y)) \in \theta$.
In case $j= i+1$, we obtain %(see Figure~\ref{fig:eq4})
\begin{equation}
  \label{eq:4}
%bdx = q_{0,i}q_{i,0}(x)\; \theta^* \; q_{0,i}(b_i) = b_i^i = a^j_j = q_{0,j}(a_j) \;
%\theta^* \; q_{0,j} q_{j,0}(y) = y,
x = q_{0,i}q_{i,0}(x) \mathrel{\theta^*} q_{0,i}(b_i) = b_i^i = a^j_j = q_{0,j}(a_j) \mathrel{\theta^*} q_{0,j} q_{j,0}(y) = y,
\end{equation}
so $(x,y)\in \theta^*$.  The relations~(\ref{eq:4}) are illustrated by the following diagram:
\begin{center}
     \begin{tikzpicture}[scale=.6]
      \node (00) at (-.1,0) [fill,circle,inner sep=1pt] {};
      \node (10) at (1.1,0) [fill,circle,inner sep=1pt] {};
      \node (30) at (2.9,0) [fill,circle,inner sep=1pt] {};
      \node (40) at (4.1,0) [fill,circle,inner sep=1pt] {};
      \draw (-0.6,-.4) node {$q_{i,0}(x)$};
      \draw (1.3,-.4) node {$b_i$};
      \draw (.5,-.2) node {$\theta$};
      \draw (2.7,-.4) node {$a_j$};
      \draw (3.5,-.2) node {$\theta$};
      \draw (4.7,-.4) node {$q_{j,0}(y)$};
      \node (m13) at (-1,3) [fill,circle,inner sep=1pt] {};
      \node (23) at (2,3) [fill,circle,inner sep=1pt] {};
      \node (53) at (5,3) [fill,circle,inner sep=1pt] {};
      \draw (-1.2,3.4) node {$x$};
      \draw (2,3.4) node {$b_i^i = a_j^j$};
      \draw (5,3.4) node {$y$};
       \draw[font=\large] (.5,1.25) node {$q_{0,i}$};
       \draw[font=\large] (3.5,1.25) node {$q_{0,j}$};
      \path[->] (00) edge (-.95,2.85);
      \path[->] (10) edge (1.95,2.85);
      \path[->] (30) edge (2.05,2.85);
      \path[->] (40) edge (4.95,2.85);
      \draw[dashed, gray]
      (00) to [out=30,in=150] (10)
      (30) to [out=30,in=150] (40);
    \end{tikzpicture}
\end{center}
\noindent %bd ok
In case $j> i+1$, we obtain
the diagram below.
\begin{center}

\begin{tikzpicture}[scale=.7]
  \node (00) at (-.1,0) [fill,circle,inner sep=1pt] {};
  \node (10) at (1.1,0) [fill,circle,inner sep=1pt] {};
  \draw (-0.6,-.4) node {$q_{i,0}(x)$};
  \draw (.5,-.2) node {$\theta$};
  \draw (1.3,-.4) node {$b_i$};

  \node (30) at (2.9,0) [fill,circle,inner sep=1pt] {};
  \node (40) at (4.1,0) [fill,circle,inner sep=1pt] {};
  \draw (2.7,-.4) node {$a_{i+1}$};
  \draw (3.5,-.2) node {$\theta$};
  \draw (4.4,-.4) node {$b_{i+1}$};

  \node (60) at (5.9,0) [fill,circle,inner sep=1pt] {};
  \draw (5.9,-.4) node {$a_{i+2}$};
  \node (90) at (9.1,0) [fill,circle,inner sep=1pt] {};
  \draw (9.1,-.4) node {$b_{j-1}$};
  \node (110) at (10.9,0) [fill,circle,inner sep=1pt] {};
  \node (120) at (12.1,0) [fill,circle,inner sep=1pt] {};
  \draw (10.7,-.4) node {$a_j$};
  \draw (11.5,-.2) node {$\theta$};
  \draw (12.6,-.4) node {$q_{j,0}(y)$};
  \node (103) at (10,3) [fill,circle,inner sep=1pt] {};
  \node (133) at (13,3) [fill,circle,inner sep=1pt] {};
  \draw (10,3.4) node {$b_{j-1}^{j-1} = a_{j}^{j}$};
  \node (m13) at (-1,3) [fill,circle,inner sep=1pt] {};
  \node (23) at (2,3) [fill,circle,inner sep=1pt] {};
  \node (53) at (5,3) [fill,circle,inner sep=1pt] {};
  \draw (-1.2,3.4) node {$x$};
  \draw (2,3.4) node {$b_i^i = a_{i+1}^{i+1}$};
  \draw (5,3.4) node {$b_{i+1}^{i+1} = a_{i+2}^{i+2}$};
  \draw (13,3.4) node {$y$};
  \path[->] (00) edge (-.95,2.85);       \draw[font=\large] (.5,1.25) node {$q_{0,i}$};
  \path[->] (10) edge (1.95,2.85);
  \path[->] (30) edge (2.05,2.85);       \draw[font=\large] (3.5,1.25) node {$q_{0,i+1}$};
  \path[->] (40) edge (4.95,2.85);
  \path[->] (60) edge (5.05,2.85);       \draw[font=\large] (6.3,1.25) node {$q_{0,i+2}$};
                                         \draw [font=\LARGE] (7.5,2.25) node {$\dots$};
                                         \draw [font=\LARGE] (7.5,0) node {$\dots$};
  \path[->] (90) edge (9.95,2.85);       \draw[font=\large] (8.8,1.25) node {$q_{0,j-1}$};
  \path[->] (110) edge (10.05,2.85);
  \path[->] (120) edge (12.95,2.85);       \draw[font=\large] (11.5,1.25) node {$q_{0,j}$};
  \draw[dashed, gray]
   (00) to [out=30,in=150] (10)
   (30) to [out=30,in=150] (40)
   (110) to [out=30,in=150] (120);

\end{tikzpicture}
\end{center}
Here too we could write out a line analogous to~(\ref{eq:4}), but it is
obvious from the diagram that $(x,y)\in \theta^*$.

To handle the general case, we let $i = vK+p$ and $j=wK+q$, for some $0\leq v \leq w \leq u$ and $0\leq p, q <
K$.  Assume for now that $p, q \geq 1$, so that neither $i$ nor $j$ is a
multiple of $K$.  Then
$q_{i,0}(y)= q_{vK+p,0}(y)=
b_{p}$ so $(q_{i,0}(x),q_{i,0}(y)) =
(q_{i,0}(x), b_{p})$ belongs to
$\eta\resB = \theta$.  Similarly,
$q_{j,0}(x)=
%q_{wK+q,0}(x)=
a_{q}$ so
$(q_{i,0}(x),q_{i,0}(y)) =
(a_q, q_{j,0}(y)) \in \theta$.
In this case, we have
\begin{align}
\label{eq:thetastar}
%bd \; \theta^* \; replaced with \mathrel{\theta^*}
  x=q_{0,i}q_{i,0}(x) & \mathrel{\theta^*} q_{0,i}(b_p) = b_p^{vK+p} = a^{vK+p+1}_{p+1} = q_{0,i+1}(a_{p+1})\nonumber\\
  & \mathrel{\theta^*} q_{0,i+1} (b_{p+1})= b^{vK+p+1}_{p+1} = a^{vK+p+2}_{p+2} = q_{0,i+2}(a_{p+2})\nonumber\\
  &\ \vdots\\ %bd \  added
  & \mathrel{\theta^*} q_{0,(v+1)K-1} (b_{K-1}) = b^{(v+1)K-1}_{K-1} = a^{(v+1)K}_1.\nonumber
\end{align}
Now $a^{(v+1)K}_1=a^{(v+1)K+1}_1=q_{0,(v+1)K+1}(a_1)$, so the relations in (\ref{eq:thetastar}) imply
$x \mathrel{\theta^*} q_{0,(v+1)K+1}(a_1)$.  Continuing, we have
\begin{align*}
  x &\mathrel{\theta^*}q_{0,(v+1)K+1}(a_1)\\
 &\mathrel{\theta^*} q_{0,(v+1)K+1}(b_1) = b^{(v+1)K+1}_{1} = a^{(v+1)K+2}_2=q_{0,(v+1)K+1}(a_2) \\
  &\ \vdots\\ %bd \  added
  &\mathrel{\theta^*}q_{0,wK+q-1}(b_{q-1}) = b^{wK+q-1}_{q-1} = a^{wK+q}_q=q_{0,j}(a_q)\\
  &\mathrel{\theta^*} q_{0,j} q_{j,0}(y) = y.
\end{align*}
Again, we arrive at the desired contradiction, $(x,y)\in \theta^*$.

The case in which exactly one of $i, j$ is a multiple of $K$ is almost
identical, so we omit the derivation.
Finally, suppose $i$ and $j$ are distinct multiples of $K$.  If $i$ and $j$
belong to distinct blocks of the partition $\sT$, then
$(q_{i,0}(x), q_{i,0}(y)) = (q_{i,0}(x), a_1) \in \theta$ and
$(q_{j,0}(x), q_{j,0}(y)) = (a_1, q_{j,0}(y)) \in \theta$, and the
foregoing argument will again lead to the contradiction
$(x,y)\in \theta^*$.

The only remaining possibility is $i, j \in \sT_n$ for some $1\leq n\leq N$.
In this case, there is no contradiction.
Rather, we observe that
(as in a number of the contradictory cases)
there is at least one $v \in
\{0, 1, \dots,u-1\}$ such that, for all
$1\leq k < K$, we have $i< vK+k < j$ and thus
$(q_{vK+k,0}(x),q_{vK+k,0}(y)) = (a_k, b_k) \in \eta\resB = \theta$.
Therefore,
 $\theta \geq \beta = \Cg^{\bA}((a_1, b_1), \dots, (a_{K-1}, b_{K-1}))$.
\end{proof}

%bd \noindent {\it Proof of Theorem~\ref{thm3}.}
\begin{proof}[Proof of Theorem~\ref{thm3}]
Lemma~\ref{lem3.1} implies that $\theta^* < \widehat{\theta}$ only if
$\beta\leq \theta < 1_B$. On the other hand,
if $\beta\leq \theta < 1_B$, then we obtain
$[\theta^*, \widehat{\theta}] \cong \prod_{n=1}^N(\Eq|\sT_n|)^{r-1}$
by the argument used to prove the same fact about
$[\beta^\star, \tbeta]$
in Theorem~\ref{thm:overalgebras-ii}.
%bd\hfill \qedsymbol
\end{proof}

%%%%%%%%%%%%%%%%%%%%%
\section{Conclusion}
We have described an approach to building new finite algebras out of old
which is useful in the following situation: given an algebra $\bB$,
we seek an algebra $\bA$ with congruence lattice $\Con \bA$ such that $\Con \bB$
is a (non-trivial) homomorphic image of $\Con \bA$; specifically, we construct
$\bA$ so that $\resB \colon  \Con \bA \rightarrow \Con \bB$ is a lattice epimorphism.
We described the original example---the ``triple-winged pentagon'' shown on
the right of Figure~\ref{fig:sevens}---found by Ralph Freese, which
motivated us to develop a general procedure for finding such finite algebraic
representations.

We mainly focused on two specific overalgebra constructions.
In each case, the congruence lattice that results has the same basic shape as
the one with which we started, except that some congruences are replaced with
intervals that are direct products of powers of partition lattices.
Thus we have identified a broad new class of finitely representable lattices.
However, the fact that the new intervals in these lattices must be products of
partition lattices seems quite limiting, and this is the first limitation that we
think future research might aim to overcome.

We envision variations on the constructions described in this paper
which might bring us closer toward the goal of replacing certain congruences,
$\beta\in \Con \bB$, with more general finite lattices,
$L\cong [\beta^*, \widehat{\beta}] \leq \Con \bA$.
However, using the constructions described above, we have found examples of
overalgebras for which it is not possible to simply add operations
in order to eliminate \emph{all} relations $\theta$ such that
$\beta^* < \theta < \widehat{\beta}$.
Nonetheless, we remain encouraged by some modest progress we have
recently made in this direction.

As a final remark, we call attention to another obvious limitation of
the methods described in this paper---they cannot be used to find an
algebra with a \emph{simple} congruence lattice.  For example, the lattice on
the left in Figure~\ref{fig:sevens}---the
``winged $\two \times \three$,''  for lack of a better name---is a simple
lattice, so it is certainly not the $\resB$-inverse image of some
smaller lattice.
As of this writing, we do not know of a finite algebra that has
a winged $\two \times \three$ congruence lattice.  More details about this
aspect of the problem appear in~\cite{IEProps}. 
%  \footnote{From~\cite{Palfy:1980} it follows that a minimal algebra $\bA$
% having a winged $\two \times \three$ congruence lattice would have to be
% permutational; i.e.~a $G$-set.
% Also, by considering the possible shapes of congruence lattices of
% \emph{intransitive} $G$-sets (as in~\cite{gsets}), we can prove
% that the winged $\two \times \three$ cannot be such a lattice.
% Therefore, a minimal representation of the winged $\two \times \three$ would
% have to come from a \emph{transitive} $G$-set.  Moreover, using Lemma 6.3
% of~\cite{Palfy:1995}, one sees that the group $G$ involved would have to be
% non-solvable.   A more detailed analysis of the winged $\two \times \three$
% lattice, including further restrictions on the class of groups that could possibly be
% used to represent this lattice, appears in~\cite{IEProps}.}

%%jp
%% Shifted from thanks:
\subsection*{Acknowledgments}
  A number of people contributed to the present work.  
  In particular, the main ideas were develop jointly
  by the Ralph Freese, Peter Jipsen, Bill Lampe, J.B.~Nation and the author.
  The primary referee made many insightful recommendations
  which led to vast improvements, not only in the overall exposition, but also in
  the specific algebraic constructions, theorem statements, and proofs.
  This article is dedicated to Ralph Freese and Bill Lampe, who
  declined to be named as co-authors despite their substantial contributions to the project.

\end{document}